


\documentclass{cmsart}
\usepackage[utf8]{inputenc}
\usepackage{latexsym}
\usepackage{amsfonts}
\usepackage{amsthm}
\usepackage{amsmath}
\usepackage{enumerate}
\usepackage{graphicx,xcolor}

\newtheorem{theorem}{Theorem}[section]

\newtheorem{lemma}[theorem]{Lemma}
\newtheorem{definition}[theorem]{Definition}

\theoremstyle{remark}
\newtheorem*{remark}{Remark}
\newtheorem{example}[theorem]{Example}

\def\Z{\mathbb{Z}}
\def\Q{\mathbb{Q}}

\def\th{{^{\rm th}}}

 \def\D{{\partial}}
 \def\BCH{{\rm BCH}}
 
 \def\Aut{{\rm Aut}}
 
 \def\ad{{\rm ad}}
 \def\half{{\frac{1}{2}}}

 \def\lra{{\longrightarrow}}
 \def\takes{{\colon}}

\newcommand{\ig}[2]{{\color{lightgray} #1}{\color{red} #2}}

 \begin{document}

\title{Explicit symmetric DGLA models of 3-cells}

\author{Itay Griniasty}
\address{Laboratory of Atomic and Solid State Physics, Cornell University, Ithaca, New York 14853}
\email{ig324@cornell.edu}

\author{Ruth Lawrence}
\address{Einstein Institute of Mathematics, Hebrew University of Jerusalem, Jerusalem 91904}
\email{ruthel.naimark@mail.huji.ac.il}

\begin{abstract}
We give explicit formulae for differential graded Lie algebra (DGLA) models of 3-cells. In particular, for a cube and an $n$-faceted banana-shaped 3-cell with two vertices, $n$ edges each joining those two vertices and $n$ bi-gon 2-cells, we construct a model symmetric under the geometric symmetries of the cell fixing two antipodal vertices. The cube model is to be used in forthcoming work for discrete analogues  of differential geometry on cubulated manifolds.
\end{abstract}

 \subjclass{17B55, 17B01, 55U15}

\keywords{DGLA, Maurer-Cartan, Baker-Campbell-Hausdorff formula}
 \maketitle

\section{Introduction}

The discretization of differential equations is a necessity of life -- most differential equations cannot be solved analytically, and a discrete numeric solution is the choice approach for many applications. However, by their nature, discrete differences lose ``associativity" preserved in continuous derivatives \cite{GGL}. A recent programme attempts to cure this problem by constructing an analogue to differential geometry with an associative (non-commutative) infinity structure \cite{LRS}. The starting point is to associate to cell complexes a differential graded Lie algebra (DGLA).

For a regular CW complex $X$, it is possible to associate a DGLA model $A=A(X)$ over $\mathbb{Q}$ satisfying the following conditions
 \begin{itemize}
 \item as a Lie algebra, $A(X)$ is freely generated by a set of generators, one for each cell in $X$ and whose grading is one less than the geometric degree of the cell;
 \item vertices (that is $0$-cells) in $X$ give rise to generators $a$ which satisfy the Maurer-Cartan equation $\partial a+\frac12[a,a]=0$ (a flatness condition);
 \item for a cell $x$ in $X$, the part of $\D{x}$ without Lie brackets is the geometric boundary $\D_0x$ (where an orientation must be fixed on each cell);
 \item (locality) for a cell $x$ in $X$, $\D{x}$ lies in the Lie algebra generated by the generators of $A(X)$ associated with cells of the closure $\bar{x}$.
 \end{itemize}

 The existence and general construction of such a model was demonstrated by Sullivan in the Appendix to \cite{TZ}; however the procedure given there is an iterative algorithm.  By \cite{B}, there exist consistent (even symmetric) towers of models of simplices, and such towers are unique up to (exact) DGLA isomorphism. The explicit (unique) model associated with the interval was found in \cite{LS}. Explicit symmetric models of two-dimensional cells were demonstrated in previous work for bi-gons (see \cite{G}, \cite{GGL}) and arbitrary $n$-gons (see \cite{GL}), the main intermediate step being the construction of a `symmetric point' in the model of the boundary of the cell, invariant under the full symmetries of the cell. In this paper we explicitly construct for the first time DGLA models of $3$-cells, in particular for a banana-shaped cell (see Theorem 1) and for a cube (see \S\ref{sec:polyhedra}), that are invariant under symmetries  fixing a major diagonal.

In \S\ref{sec:generalproperties} we give a collection of general facts about DGLAs and models of cell complexes (some reproduced from \cite{LS}, \cite{GL}), including defining the notion of a point (solution of MC), of a particular $k$-cell ($k>1$) being localised at a point in a model, how to `twist' a model so as to move the points of localisation of cells as well as the universal averages of \cite{LSi}. In \S\ref{sec:banana}, we construct a symmetric model of the $n$-faceted banana in which the main cell is localised at a symmetric central point, by first constructing a  model localised at one of the vertices of the banana and then twisting it. In \S\ref{sec:polyhedra}, we derive a model of an arbitrary polyhedral 3-cell and give the example of the cube where the induced model is invariant under those symmetries of the cube fixing a main diagonal; this model will be used in \cite{LRS}.

\section{General properties of DGLAs and DGLA models}
\label{sec:generalproperties}

\subsection{GENERAL DGLAs} For simplicity we will work over $k=\mathbb{Q}$, though the discussion also holds for any field of characteristic zero. Recall that a DGLA over $k$ is a vector space
$A$ over $k$ with $\Z$-grading $A=\oplus_{n\in\Z}A_n$ along
with a bilinear map $[.,.]\takes{}A\times{}A\lra{}A$ (bracket, respecting the grading)  and a linear map $\D\takes{}A\lra{}A$ (differential, grading shift $-1$) for which $\D^2=0$
while
 \begin{description}
 \item[symmetry of bracket] $[b,a]=-(-1)^{|a||b|}[a,b]$
 \item[Jacobi identity] $(-1)^{|a||b|}[[b,c],a]+(-1)^{|b||c|}[[c,a],b]+(-1)^{|c||a|}[[a,b],c]=0$
 \item[Leibniz rule] $\D[a,b]=[\D{a},b]+(-1)^{|a|}[a,\D{b}]$
 \end{description}
 for all homogeneous $a,b,c\in{}A$. Defining the {\sl adjoint} action of $A$ on itself by $\ad_e(a)=[e,a]$, the operator $\ad_e:A\longrightarrow{}A$ has grading shift $|e|$, for homogeneous $e\in{}A$. The Jacobi identity and Leibnitz rule can now be reformulated as operator equalities
 \begin{description}
 \item[Jacobi identity] $\ad_{[a,b]}=[\ad_a,\ad_b]$;
 \item[Leibniz rule] $\ad_{\D{a}}=[\D,\ad_a]$;
 \end{description}
in terms of the {graded} operator commutator, $[A,B]\equiv{}A\circ{}B-(-1)^{|A||B|}B\circ{}A$. Since the relations all preserve the number of brackets, it is meaningful to define an additional grading by the number of (Lie) brackets; in particular, for $x\in{}A$, let $x^{[m]}$ denote the part of $x$ containing precisely $m$ brackets.

\subsection{POINTS AND LOCALISATION} An element $a\in{}A_{-1}$ is called a {\it point} (or said to be {\it flat}) in the model, if it satisfies the Maurer-Cartan equation $\D{a}+{\tfrac12}[a,a]=0$.  For any point $a\in{}A_{-1}$, define the {\sl twisted differential} $\D_a$ by $\D_a\equiv\D+\ad_a$; the fact that $\D_a^2=0$ is guaranteed by the Maurer-Cartan condition. By the {\sl  localisation} of $A$ to a point $a$, denoted $A(a)$, we will mean the DGLA which as a graded Lie algebra is
$$\left(\ker\D_a|_{A_0}\right)\oplus\bigoplus_{n>0}A_n$$
with the induced bracket from $A$ and the differential $\D_a$. This contains only non-negative gradings. Leibniz guarantees that $\ker\D_a|_{A_0}$ is closed under Lie bracket.

\subsection{EDGES AND FLOWS} Any element $e\in{}A_0$ defines a {\it flow} on $A$ by
 $$\frac{dx}{dt}=\D{e}-\ad_e(x)\quad{\rm on}\quad A_{-1}\>,\qquad
 \frac{dx}{dt}=-\ad_e(x)\quad{\rm on}\quad A_{\not=-1}\>,\qquad (1)$$
This flow is called the {\it flow by} $e$, and preserves the grading. (To define this rigorously, one may work in a space quotiented by all expressions involving $N+1$ Lie brackets, as in \cite{LS}, effectively truncating to the space of linear combinations of terms involving at most $N$ Lie brackets, whose coefficients are polynomials in $t$ with rational coefficients. Then one considers the tower of spaces as $N$ increases. Equivalently, one may choose a basis for the finite-dimensional space of expressions involving exactly $N$ Lie brackets and then allowed expressions are formal combinations of these basis elements, over all $N$, with coefficients which are polynomials in $t$. While we talk about functions of $t$ and their derivatives, these are well-defined for rational $t$, with derivatives being well-defined since all the coefficients are polynomial functions of $t$.)

\begin{lemma} For any $e\in{}A_0$, the flow by $e$ in grading $-1$ preserves flatness. That is, if $x(t)\in{}A_{-1}$ satisfies (1) with initial condition $x(0)$ satisfying the Maurer-Cartan condition, then at any (rational) time $t$, also $x(t)$ satisfies Maurer-Cartan.
\end{lemma}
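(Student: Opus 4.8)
The plan is to track the Maurer--Cartan curvature $F(t):=\D x(t)+\half[x(t),x(t)]$, which lives in $A_{-2}$, and to show that it obeys a linear ODE driven by $\ad_e$. Since it vanishes at $t=0$ by hypothesis, uniqueness of solutions will then force it to vanish for all $t$, which is exactly the flatness of $x(t)$. So the first step is to differentiate $F$ along the flow: using the flow equation $\dot x=\D e-\ad_e(x)$ on $A_{-1}$, the Leibniz rule to expand $\D\dot x$, and graded antisymmetry (both $x$ and $\dot x$ sit in degree $-1$) to write $\half\tfrac{d}{dt}[x,x]=[x,\dot x]$, I would compute $\dot F$ term by term.

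Next I expect two kinds of terms to appear: those involving $\D e$ and those built only from $e$, $x$ and brackets. The $\D e$ contributions should cancel: $\D^2e=0$ kills one of them, and the remaining pair $-[\D e,x]+[x,\D e]$ vanishes by graded antisymmetry, since both $\D e$ and $x$ lie in degree $-1$. What should survive is $\dot F=-[e,\D x]-[x,[e,x]]$.

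The key algebraic identity needed is then $[x,[e,x]]=\half[e,[x,x]]$. I would derive this from the Jacobi identity applied to the triple $(e,x,x)$, with careful sign bookkeeping (using $[x,e]=-[e,x]$ and the degree-$(-1)$ antisymmetry $[[e,x],x]=[x,[e,x]]$). Granting it, the surviving terms reassemble as
$$\dot F=-[e,\D x]-\half[e,[x,x]]=-\ad_e(F).$$

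Finally, I would observe that $\dot F=-\ad_e(F)$ is precisely the flow (1) in grading $-2$ applied to $F$ itself. The constant path $F\equiv 0$ solves this linear equation with the same initial datum $F(0)=0$ (which holds because $x(0)$ is flat), so uniqueness of solutions of (1), in the truncated/tower sense in which the flow is defined, yields $F(t)=0$ for every rational $t$; that is, $x(t)$ satisfies Maurer--Cartan. I expect the main obstacle to be the honest sign accounting in the second and third paragraphs: the Jacobi computation yielding $[x,[e,x]]=\half[e,[x,x]]$ is where an error is easiest to make, and it is the crux that makes the whole curvature flow linear and hence amenable to the uniqueness argument.
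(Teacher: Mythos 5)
Your proposal is correct and takes essentially the same route as the paper's proof: both track the curvature $f(t)=\D x+\half[x,x]\in A_{-2}$, derive the linear homogeneous ODE $\dot f=-\ad_e f$ (the paper via the operator identities $\ad_{\D e}=[\D,\ad_e]$ and $\ad_{[x,x]}=[\ad_x,\ad_x]$, you via direct bracket manipulation, and your sign bookkeeping checks out, including the crux $[x,[e,x]]=\half[e,[x,x]]$), and conclude $f\equiv0$ from $f(0)=0$. Your closing observation that $\dot f=-\ad_e f$ is just the flow (1) in grading $-2$ is a nice touch the paper leaves implicit.
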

\begin{proof} As in the proof of Theorem 1 in \cite{LS}, consider the curvature $f(t)\in{}A_{-2}$ defined by $f\equiv{}\D{x}+{\frac12}[x,x]$. It satisfies
\begin{align*}
\frac{df}{dt}&=\D\frac{dx}{dt}+\left[x,\frac{dx}{dt}\right]=-\D(\ad_ex)+[x,\D{e}]-[x,\ad_e(x)]\\
&=-\D\circ\ad_e(x)+\ad_{\D{e}}(x)+(\ad_x)^2e=-\ad_e\circ\D(x)+{\tfrac12}\ad_{[x,x]}e=-\ad_ef\>,
\end{align*}
a first order homogeneous linear ode for $f(t)$ with initial condition $f(0)=0$, since $x(0)$ satisfies the Maurer-Cartan condition. Thus $f(t)=0$ for all $t$, as required.
\end{proof}

\noindent Linearity of the differential equations (1) in $e$, ensures that flowing by $e$ for time $t$ is equivalent to flowing by $te$ for a unit time. Denote the result of flowing by $e$ from $a\in{}A_{-1}$ for unit time, by $u_e(a)$, so that the solution of the first equation in (1) is $x(t)=u_{te}(x(0))$. Explicitly
$$u_e(a)=e^{-\ad_e}a+\frac{1-e^{-\ad_e}}{\ad_e}\D{e}
=a+\D{e}-[e,a+\tfrac12\D{e}]+(\geq2\ \hbox{brackets})$$
where the meaning of  the operator quotient is the series $\sum\limits_{n=0}^\infty\frac{(-1)^n}{(n+1)!}(\ad_e)^n(\D{e})$.

\begin{lemma} For a point $a$, the condition that $u_e(a)=a$ is equivalent to $\D_ae=0$, that is $e\in{}A(a)_{0}$ ($e$ is localised at $a$). This is a linear condition on $e$ and therefore in this case the flow by $e$ fixes $a$ at all time (not only after unit time).
\end{lemma}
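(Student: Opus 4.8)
The plan is to reduce the entire statement to a single algebraic identity for $u_e(a)-a$ followed by an invertibility remark. First I would start from the closed form of the unit-time flow recorded just above the statement,
$$u_e(a)=e^{-\ad_e}a+\frac{1-e^{-\ad_e}}{\ad_e}\D{e},$$
and collect the two terms over the common operator $\frac{1-e^{-\ad_e}}{\ad_e}$. Using $e^{-\ad_e}-1=-\ad_e\cdot\frac{1-e^{-\ad_e}}{\ad_e}$ this collapses to
$$u_e(a)-a=\frac{1-e^{-\ad_e}}{\ad_e}\bigl(\D{e}-\ad_e(a)\bigr).$$
Since $|e|=0$ and $|a|=-1$ give $\ad_e(a)=[e,a]=-[a,e]=-\ad_a(e)$, the bracketed factor equals $\D{e}+\ad_a(e)=\D_a e$. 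Thus the key identity to establish is
$$u_e(a)-a=\frac{1-e^{-\ad_e}}{\ad_e}\,\D_a e.$$

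The second step is to observe that the prefactor is invertible, so that $u_e(a)=a$ if and only if $\D_a e=0$, i.e. $e\in\ker\D_a|_{A_0}=A(a)_0$. Here I would invoke the truncated tower framework used above to define the flow: working modulo all expressions with at least $N+1$ brackets, the operator $\ad_e$ strictly raises the number of brackets and is therefore nilpotent, so $\frac{1-e^{-\ad_e}}{\ad_e}=\sum_{n\ge0}\frac{(-1)^n}{(n+1)!}(\ad_e)^n$ is a finite sum equal to the identity plus a nilpotent operator, hence invertible (its inverse being the Bernoulli series $\frac{\ad_e}{1-e^{-\ad_e}}$). These inverses are compatible as $N$ increases, so the invertibility persists in the full filtered model, and the equivalence follows at once.

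Finally, linearity of the condition $\D_a e=0$ in $e$ is immediate, and the claim that the flow then fixes $a$ for all time follows in two equivalent ways. One may note that the constant curve $x(t)\equiv a$ solves the autonomous ODE (1) precisely when $\D{e}-\ad_e(a)=\D_a e=0$, so uniqueness of solutions forces $u_{te}(a)=a$ for every $t$. Alternatively, applying the key identity with $e$ replaced by $te$ and using $\D_a(te)=t\,\D_a e$ gives $u_{te}(a)-a=\frac{1-e^{-t\ad_e}}{\ad_e}\,\D_a e=0$ directly.

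The only genuinely delicate point is the invertibility of $\frac{1-e^{-\ad_e}}{\ad_e}$: this is not an operator on a finite-dimensional space, and one must interpret it within the bracket-filtration tower in which the flow itself was defined, where the nilpotency of $\ad_e$ makes the series a well-defined invertible operator. Everything else is a short formal manipulation of the closed-form expression for $u_e(a)$.
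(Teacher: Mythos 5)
Your proof is correct; the paper in fact states this lemma without proof, and your derivation of the identity $u_e(a)-a=\frac{1-e^{-\ad_e}}{\ad_e}\,\D_ae$ (using $[e,a]=-[a,e]$ since $|e|=0$, $|a|=-1$) followed by inversion of the prefactor is precisely the computation being left implicit --- it is the same Bernoulli-series inversion of $\frac{1-e^{-E}}{E}$ that the paper itself uses in Example 2.4, where $u_e(a)=b$ is solved for $\D{e}$. Your handling of the two remaining points is also sound: the invertibility is correctly placed in the bracket-truncation tower where $\ad_e$ is nilpotent (the one genuinely delicate issue, as you note), and the all-time statement follows exactly as the paper's remark intends, from linearity $\D_a(te)=t\,\D_ae$ together with the fact that flowing by $e$ for time $t$ equals flowing by $te$ for unit time (your alternative via uniqueness for the constant solution of the ODE is equally valid).
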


\begin{lemma}(see \cite{GGL}, Lemma 2.2) If $e$ flows from a point $a$ to a point $b$ in unit time, then $\D_b\circ\exp(-\ad_e)=\exp(-\ad_e)\circ\D_a$ so that $\exp(-\ad_e)$ intertwines the localisation $A(a)$ to the localisation $A(b)$.
\end{lemma}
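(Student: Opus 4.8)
The plan is to interpolate between $a$ and $b$ by the flow and reduce the operator identity to a first-order ODE in the conjugating operator. Let $x(t)=u_{te}(a)$ be the solution of the first equation of (1) with $x(0)=a$; by the linearity remark preceding the statement it reaches $x(1)=u_e(a)=b$, and by the flatness lemma above every $x(t)$ is again a point, so each $\D_{x(t)}=\D+\ad_{x(t)}$ squares to zero. Set $g(t)=\exp(-t\,\ad_e)$, an invertible grading-preserving operator with $g(0)=\mathrm{id}$, $g(1)=\exp(-\ad_e)$, inverse $g(t)^{-1}=\exp(t\,\ad_e)$, and derivative $\frac{dg}{dt}=-\ad_e\circ g=-g\circ\ad_e$ (the two expressions agree because $\ad_e$ commutes with $g$). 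I would then consider the conjugated differential $D(t):=g(t)^{-1}\circ\D_{x(t)}\circ g(t)$. Since $D(0)=\D_a$ and $D(1)=\exp(\ad_e)\circ\D_b\circ\exp(-\ad_e)$, the desired identity $\D_b\circ\exp(-\ad_e)=\exp(-\ad_e)\circ\D_a$ is exactly the assertion that $D(1)=D(0)$, i.e. that $D$ is constant.

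Next I would differentiate $D$. Applying the product rule, using $\frac{d}{dt}\D_{x(t)}=\ad_{\dot x}$ where $\dot x=\frac{dx}{dt}$, and sliding $\ad_e$ through the commuting factors $g^{\pm1}$, the three terms assemble into
\[
\frac{dD}{dt}=g^{-1}\bigl([\ad_e,\D_{x}]+\ad_{\dot x}\bigr)\,g,
\]
where $[\,\cdot\,,\,\cdot\,]$ now denotes the ordinary operator commutator. It therefore suffices to show the bracketed operator vanishes. From the flow equation $\dot x=\D e-[e,x]$ together with the operator forms of the Leibniz and Jacobi identities one obtains $\ad_{\dot x}=[\D,\ad_e]-[\ad_e,\ad_x]$, while expanding $\D_x=\D+\ad_x$ gives $[\ad_e,\D_x]=-[\D,\ad_e]+[\ad_e,\ad_x]$. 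These two contributions are exact negatives, so the bracket is zero and $\frac{dD}{dt}=0$, as needed.

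Finally, having established $\D_b\circ\exp(-\ad_e)=\exp(-\ad_e)\circ\D_a$, I would conclude as follows. The operator $\exp(-\ad_e)$ is the exponential of the grading-preserving derivation $-\ad_e$, hence a graded Lie-algebra automorphism of $A$; combined with the intertwining of $\D_a$ and $\D_b$ it carries $\ker\D_a$ onto $\ker\D_b$ in each grading and respects both the bracket and the twisted differentials, so it restricts to an isomorphism of localisations $A(a)\lra A(b)$.

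The one genuinely delicate point, and the step I expect to be the main obstacle, is the sign bookkeeping in reducing $\ad_{\dot x}$ through the graded Leibniz and Jacobi identities and in expanding $[\ad_e,\D_x]$: because $|e|=0$, every graded commutator that appears collapses to an ordinary commutator, and one must track this carefully to see that the two contributions cancel rather than reinforce. Everything else is the standard observation that conjugation of the twisted differential is constant along the flow generated by $e$.
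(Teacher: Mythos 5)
Your proof is correct, and the sign bookkeeping you flagged as the delicate point does work out: since $|e|=0$, the graded commutators in the operator forms of Leibniz and Jacobi collapse to ordinary ones, so $\ad_{\D{e}}=[\D,\ad_e]=\D\circ\ad_e-\ad_e\circ\D$ and $\ad_{[e,x]}=[\ad_e,\ad_x]$, whence $\ad_{\dot x}+[\ad_e,\D_x]=\left([\D,\ad_e]-[\ad_e,\ad_x]\right)+\left(-[\D,\ad_e]+[\ad_e,\ad_x]\right)=0$ and $D(t)$ is constant, giving $\exp(\ad_e)\circ\D_b\circ\exp(-\ad_e)=\D_a$ at $t=1$ as required. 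As for comparison: this paper states the lemma without proof, citing Lemma 2.2 of \cite{GGL}, so there is no in-text argument for you to have tracked; what can fairly be said is that your constancy-along-the-flow argument is exactly in the spirit of the paper's own proof of Lemma 2.1, where the curvature $f(t)$ is shown to satisfy a homogeneous linear first-order ODE with vanishing initial data (well-defined in the formal polynomial-in-$t$ setting the paper sets up in its discussion of flows). An alternative, more computational route is to verify $\D_b\circ e^{-\ad_e}=e^{-\ad_e}\circ\D_a$ directly by substituting the explicit formula $b=u_e(a)=e^{-\ad_e}a+\frac{1-e^{-\ad_e}}{\ad_e}\D{e}$, using $\ad_{e^{-\ad_e}a}=e^{-\ad_e}\circ\ad_a\circ e^{\ad_e}$ together with a series identity for $[\D,e^{-\ad_e}]$; your ODE argument buys freedom from those series manipulations at the cost of only the formal calculus already in place. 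Two minor remarks: your appeal to Lemma 2.1 for flatness of the intermediate points $x(t)$ is harmless but unnecessary, since the computation of $dD/dt$ never uses $\D_{x(t)}^2=0$ and flatness of $a$ and $b$ is part of the hypothesis; and in the final step it is worth stating explicitly that $\exp(-\ad_e)$ preserves the grading because $\ad_e$ has grading shift $|e|=0$, which is what allows it to restrict to an isomorphism from $\left(\ker\D_a|_{A_0}\right)\oplus\bigoplus_{n>0}A_n$ onto the corresponding localisation at $b$.
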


\begin{example} The unique DGLA model, $A(I)$, of an interval has three generators; $a$, $b$ of grading $-1$ (the endpoints) and $e$ of grading $0$ (the 1-cell). The differential is given by the condition $u_e(a)=b$ (see \cite{LS}). Explicitly
$$\D{e}=(\ad_e)b+\sum_{i=0}^\infty{\frac{B_i}{i!}}(\ad_e)^i(b-a)=\frac{E}{1-e^E}a+\frac{E}{1-e^{-E}}b=b-a+\tfrac{E}2(a+b)+\cdots$$
where $B_i$ denotes the $i\th$ Bernoulli number defined as
coefficients in the expansion
$\frac{x}{e^x-1}=\sum\limits_{n=0}^\infty{}B_n \frac{x^n}{n!}$, $E\equiv\ad_e$ and the expressions in $E$ are considered as formal power series.
\end{example}

\begin{example} In any DGLA model $A(X)$ of a regular cell complex $X$, for any 1-cell $e$ in $X$ with endpoints $a$, $b$, there is a natural DGLA homomorphism $A(I)\longrightarrow{}A(X)$, while $u_e(a)=b$.
\end{example}

\noindent Denote by $\BCH(x,y)$ the {\sl Baker-Campbell-Hausdorff formula} for the element of the free Lie algebra (over $\Q$) in two variables $x$, $y$ such that as formal series
$\exp(x).\exp(y)=\exp\BCH(x,y)$ (see \cite{E} for a short proof of existence and \cite{D} for a computational formula). Here we collect some elementary properties which follow from the definition, Jacobi and uniqueness of BCH as a free Lie algebra element.

\begin{lemma}
 \begin{itemize}
 \item[(a)] The first few terms of $\BCH(x,y)$ are
     \begin{align*}
    \BCH(x,y)=x+y&+\frac{1}{2}[x,y]+\frac{1}{12}(X^2y+Y^2x)-\frac{1}{24}XYXy\!+\!\cdots
     \end{align*}
     where $X,Y$ denote $\ad_x,\ad_y$.
 \item[(b)]  $\BCH(\ad_x,\ad_y)=\ad_{\BCH(x,y)}$.
 \item[(c)] BCH is associative, that is $\BCH\big(\BCH(x,y),z\big)=\BCH\big(x,\BCH(y,z)\big)$ for any symbols $x,y,z$. The iterated BCH of $n$ symbols $x_1,\ldots{}x_n\in{}A$ will be written $\BCH(x_1,\ldots,x_n)$.
\end{itemize}
\end{lemma}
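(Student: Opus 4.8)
The plan is to treat $\BCH(x,y)$ throughout as a fixed element of the completed free Lie algebra $\Lie(x,y)$ over $\Q$, characterised by the identity $\exp(x)\exp(y)=\exp\BCH(x,y)$ in the completed free associative algebra on the symbols $x,y$; all three parts then follow from this universal description together with the Jacobi identity. For part (a) I would simply expand both sides to low order. Writing $u=\exp(x)\exp(y)-1=\sum_{i,j}\frac{x^iy^j}{i!\,j!}-1$ and using $\log(1+u)=u-\half u^2+\third u^3-\cdots$, one collects homogeneous pieces by total degree in $x,y$: degree $1$ gives $x+y$, degree $2$ gives $\half(xy-yx)=\half[x,y]$, and degrees $3$ and $4$ give the stated expressions after rewriting associative monomials as iterated brackets. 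The only non-mechanical point is that the raw degree-$4$ output must be reduced, using Jacobi and antisymmetry, to the single term $-\frac1{24}XYXy$; concretely one checks $[x,[y,[x,y]]]=[y,[x,[x,y]]]$ from the Jacobi identity applied with third argument $[x,y]$, which identifies the computed coefficient with the displayed one.

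For part (b), the key observation is that $\ad\colon A\to\mathrm{End}(A)$ is a homomorphism of graded Lie algebras: the operator form of the Jacobi identity, $\ad_{[a,b]}=[\ad_a,\ad_b]$, says exactly that $\ad$ carries the DGLA bracket to the graded operator commutator. Since $\BCH(x,y)$ is by construction a formal sum of iterated Lie brackets of $x$ and $y$ with rational coefficients, applying $\ad$ sends each iterated bracket $[\cdots[x,y]\cdots]$ to the corresponding iterated commutator $[\cdots[\ad_x,\ad_y]\cdots]$ of operators, with identical coefficients. Because the defining series for $\BCH(\ad_x,\ad_y)$ uses precisely the same coefficients and the same (graded) bracket, term-by-term comparison gives $\ad_{\BCH(x,y)}=\BCH(\ad_x,\ad_y)$; nothing beyond invoking the homomorphism property is required.

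For part (c) I would argue by injectivity of $\exp$. Working in the completed free associative algebra on three symbols $x,y,z$, associativity of the product gives
$$\exp\BCH\big(\BCH(x,y),z\big)=\exp\BCH(x,y)\,\exp(z)=\big(\exp(x)\exp(y)\big)\exp(z)=\exp(x)\big(\exp(y)\exp(z)\big)=\exp\BCH\big(x,\BCH(y,z)\big).$$
Both arguments of the outer $\exp$ lie in $\Lie(x,y,z)$, and $\exp$ is injective on series with vanishing constant term, with inverse $\log$; equivalently, the two Lie-series arguments are uniquely recovered from their exponentials. Hence $\BCH\big(\BCH(x,y),z\big)=\BCH\big(x,\BCH(y,z)\big)$, and consequently the iterated expression $\BCH(x_1,\dots,x_n)$ is well defined independent of bracketing.

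The step requiring the most care is the rigorous setting-up of the formal framework rather than any single manipulation: one must fix a grading-compatible completion of the free associative and free Lie algebras in which $\exp$ and $\log$ are mutually inverse, so that the ``uniqueness of $\BCH$ as a free Lie algebra element'' used in part (c) is genuinely available, and in which the term-by-term transfer of part (b) is legitimate under the DGLA sign conventions. Once this completion is in place — for instance the truncation-by-number-of-brackets setup already invoked for the flows in (1) — parts (a)--(c) reduce to the elementary computation and homomorphism arguments sketched above.
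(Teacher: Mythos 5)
Your proposal is correct and follows exactly the route the paper intends: the paper gives no written proof of this lemma, stating only that the properties ``follow from the definition, Jacobi and uniqueness of BCH as a free Lie algebra element,'' and your three arguments (expansion of $\log(\exp(x)\exp(y))$ for (a), the homomorphism property $\ad_{[a,b]}=[\ad_a,\ad_b]$ for (b), and associativity of the product plus injectivity of $\exp$ on constant-term-free series for (c)) are precisely those ingredients made explicit. Your degree-$4$ check $[x,[y,[x,y]]]=[y,[x,[x,y]]]$ via Jacobi is indeed the step needed to reconcile the standard expansion with the paper's form $-\frac{1}{24}XYXy$, and your closing remark about working in the bracket-graded completion matches the formal setup the paper already uses for its flows.
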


\begin{lemma} There is a homomorphism from the group $A_0$ considered with operation $\BCH$, to the group $\Aut(A)$, defined by mapping $e\in{}A_0$ to the flow (in unit time) as defined on all gradings in $A$ by equations (1).
\end{lemma}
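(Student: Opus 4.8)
The plan is to let $\Phi_e$ denote the time-$1$ flow by $e$ acting on all of $A$ via (1), so that $\Phi_e=\exp(-\ad_e)$ on $A_{\neq-1}$ and $\Phi_e=u_e$ on $A_{-1}$, and to establish two things: that each $\Phi_e$ is an invertible element of $\Aut(A)$ (invertibility coming from the reverse flow $\Phi_{-e}$, with $\Phi_0=\mathrm{id}$, while the structure-preservation is the routine content already implicit in the flow relations, cf.\ the preceding intertwining lemma), and the composition law $\Phi_{e_2}\circ\Phi_{e_1}=\Phi_{\BCH(e_1,e_2)}$, which is the homomorphism property and the real substance of the statement. The strategy throughout is to express each flow as the exponential of a linear operator depending linearly on $e$, and then to read off composition from the defining property $\exp(x)\exp(y)=\exp\BCH(x,y)$ together with Lemma 2.6(b); well-definedness of every exponential is guaranteed by the truncated (pro-nilpotent) framework set up after Lemma 2.2.

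On the gradings $A_{\neq-1}$ this is immediate: here $\Phi_e=\exp(-\ad_e)$, so $\Phi_{e_2}\circ\Phi_{e_1}=\exp(-\ad_{e_2})\exp(-\ad_{e_1})=\exp\BCH(-\ad_{e_2},-\ad_{e_1})$. The elementary identity $\BCH(-a,-b)=-\BCH(b,a)$ (obtained by inverting $\exp(a)\exp(b)=\exp\BCH(a,b)$) together with Lemma 2.6(b) rewrites the exponent as $-\ad_{\BCH(e_1,e_2)}$, whence $\Phi_{e_2}\circ\Phi_{e_1}=\Phi_{\BCH(e_1,e_2)}$.

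The grading $-1$ piece is the only real obstacle, since there $\Phi_e=u_e$ is affine rather than linear. I would remove the inhomogeneity by homogenising: adjoin a formal symbol $\eta$ of degree $-1$ and define the linear operator $\hat N_e$ on $A_{-1}\oplus\Q\eta$ by $\hat N_e(x)=-\ad_e(x)$ for $x\in A_{-1}$ and $\hat N_e(\eta)=\D e$. Solving $\frac{d}{dt}X=\hat N_e X$ from the initial vector $x_0+\eta$ reproduces exactly the affine equation (1) for the $A_{-1}$-component (the coefficient of $\eta$ staying $1$), so that $\exp(\hat N_e)(x_0+\eta)=u_e(x_0)+\eta$; thus $u_e$ is the projection of a genuine linear flow. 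A direct computation using Jacobi, in the form $[\ad_{e_1},\ad_{e_2}]=\ad_{[e_1,e_2]}$, and Leibniz, in the form $\D[e_1,e_2]=[\D e_1,e_2]+[e_1,\D e_2]$, then shows $[\hat N_{e_1},\hat N_{e_2}]=-\hat N_{[e_1,e_2]}$; that is, $e\mapsto\hat N_e$ is a Lie anti-homomorphism. Consequently $\exp(\hat N_{e_2})\exp(\hat N_{e_1})=\exp\BCH(\hat N_{e_2},\hat N_{e_1})=\exp(\hat N_{\BCH(e_1,e_2)})$, and projecting away $\eta$ gives $u_{e_2}\circ u_{e_1}=u_{\BCH(e_1,e_2)}$, matching the law already found on the other gradings.

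The two computations assemble into $\Phi_{e_2}\circ\Phi_{e_1}=\Phi_{\BCH(e_1,e_2)}$ on all of $A$, which is the assertion. I expect the only delicate points to be bookkeeping: the inhomogeneous term in grading $-1$, tamed by the homogenisation above, and the order of arguments, since the flows compose with the $\BCH$ arguments reversed relative to the order of composition, so that strictly one obtains a homomorphism into $\Aut(A)$ with the opposite composition (equivalently an anti-homomorphism) — a distinction that disappears once the orientation convention on $\Aut(A)$ is fixed to match. Invertibility via $\Phi_{-e}$ and $\Phi_0=\mathrm{id}$ then complete the verification that $e\mapsto\Phi_e$ is a group homomorphism.
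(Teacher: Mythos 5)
Your argument is correct, and on the heart of the matter it takes a genuinely different route from the paper. The paper's proof splits the same way you do, and on $A_{\neq-1}$ is word-for-word your first computation ($\exp(-\ad_{e_2})\circ\exp(-\ad_{e_1})=\exp(-\ad_{\BCH(e_1,e_2)})$ via Lemma 2.6(b)); but for the grading $-1$ case, where the flow is affine, the paper simply cites Lemma 3 of \cite{LS} together with the closed formula for $u_e(a)$ to get $u_{e_2}\big(u_{e_1}(a)\big)=u_{\BCH(e_1,e_2)}(a)$, whereas you prove this from scratch by homogenisation: adjoining $\eta$ of degree $-1$, setting $\hat N_e(x)=-\ad_e(x)$ and $\hat N_e(\eta)=\D e$, and checking $[\hat N_{e_1},\hat N_{e_2}]=-\hat N_{[e_1,e_2]}$. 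That bracket identity does hold --- on $A_{-1}$ it is Jacobi, and on $\eta$ it is Leibniz combined with the graded symmetry $[\D e_1,e_2]=-[e_2,\D e_1]$ --- and your verification $\exp(\hat N_e)(x_0+\eta)=e^{-\ad_e}x_0+\tfrac{1-e^{-\ad_e}}{\ad_e}\D e+\eta$ reproduces exactly the paper's explicit formula for $u_e$, so the projection argument is sound. What your approach buys is self-containedness and conceptual unity: both cases become exponentials of operators depending linearly on $e$, the composition law on all gradings reduces to the single defining property $\exp(x)\exp(y)=\exp\BCH(x,y)$, and the computation makes visible exactly where Jacobi and Leibniz enter; what the paper's citation buys is brevity and consistency with the truncation framework already set up in \cite{LS}. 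Two small points to tidy: the passage from the anti-homomorphism property to $\BCH(\hat N_{e_2},\hat N_{e_1})=\hat N_{\BCH(e_1,e_2)}$ deserves one explicit line (it is the standard fact that a linear map reversing brackets reverses the arguments of $\BCH$, which follows since $\BCH$ is a Lie series), and your closing remark about the order-reversal convention is fair --- the paper's own statement $u_{e_2}\circ u_{e_1}=u_{\BCH(e_1,e_2)}$ has the same orientation, so you match the paper exactly and the word ``homomorphism'' is being used there with the same implicit convention you describe.
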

\begin{proof}
By \cite{LS} Lemma 3,  and the explicit formula given for $u_e(a)$ above, it follows that
$$u_{e_2}\left(u_{e_1}(a)\right)=u_{\BCH(e_1,e_2)}(a)\>,$$
for any $a\in{}A_{-1}$. Thus, on elements of grading $-1$, a flow by $e_1$ for unit time followed by a flow by $e_2$ for unit time is equivalent to a flow by $\BCH(e_1,e_2)$ for unit time. Note that the flow for unit time by $e$ acting on $A_n$ for $n\not=-1$, is just the exponential operator $\exp(-\ad_e)$ for which it is immediate that
$\exp(-\ad_{e_2})\circ\exp(-\ad_{e_1})=\exp(-\ad_{BCH(e_1,e_2)})$.\end{proof}

\begin{definition} By a piecewise linear path $\gamma$ in $A$, is meant a sequence of points $a_i\in{}A_{-1}$ ($0\leq{}i\leq{}m$)  along with elements $e_i\in{}A_0$ ($1\leq{}i\leq{}m$), called edges, which are such that the edges define flows between the respective points, that is $u_{e_i}(a_{i-1})=a_i$ for all $1\leq{}i\leq{}m$.  For such a path, we denote by $\BCH(\gamma)\in{}A_0$ the iterated $BCH$ of the edges, $\BCH(\gamma)\equiv\BCH(e_1,\ldots,e_m)$. A piecewise linear path in $A$ is called a loop if its initial and final points agree, that is $a_0=a_m$.
\end{definition}

\begin{lemma} (see \cite{B4}) If $X$ has $c$ connected components and $\{a_1,\ldots,a_c\}$ is a choice of basepoints, one in each connected component, then  the set of points in $A(X)$ is
$$\bigcup_{i=1}^c\big\{u_e(a_i)\bigm|e\in{}A_0\big\}\cup
\big\{u_e(0)\bigm|e\in{}A_0\big\}\>.$$
For each $i$, the map $\pi_i\takes{}e\mapsto{}u_e(a_i)$ is a `fibration', with fibre $\pi_i^{-1}(a_i)$ generated as a vector space by $\{\BCH(\gamma)|\gamma\in\pi_1(X,a_i)\}$, while the map $\pi_0\takes{}e\mapsto{}u_e(0)$ is injective.
\end{lemma}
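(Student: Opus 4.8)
The plan is to split the statement into the fibre description over a basepoint, the identification of that fibre with logarithms of holonomies, and finally surjectivity together with the injectivity of $\pi_0$. First I would pin down the fibre over $a_i$. By the localisation criterion $u_e(a)=a\iff\D_ae=0$, a grading-$0$ element satisfies $\pi_i(e)=u_e(a_i)=a_i$ exactly when $\D_{a_i}e=0$, so $\pi_i^{-1}(a_i)=\ker\D_{a_i}|_{A_0}=A(a_i)_0$ is a linear subspace of $A_0$. The intertwining lemma (if $e_0$ flows $a_i$ to $b$ then $\exp(-\ad_{e_0})$ conjugates $\D_{a_i}$ into $\D_b$), combined with the group law $u_{e_2}\circ u_{e_1}=u_{\BCH(e_1,e_2)}$ rewriting $u_e(a_i)=u_{e_0}(a_i)$ as $\BCH(-e_0,e)\in\ker\D_{a_i}|_{A_0}$, then exhibits every fibre $\pi_i^{-1}(b)$ over a point $b$ of the orbit as the $\BCH$-translate $\{\BCH(e_0,f)\mid f\in\ker\D_{a_i}|_{A_0}\}$ of the fibre over $a_i$; this uniform structure of the fibres is what is meant by calling $\pi_i$ a `fibration'.

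Next I would show that loops land in the fibre, giving one inclusion of the fibre description. For a loop $\gamma$ based at $a_i$ with edges $e_1,\dots,e_m$, the $\BCH$-homomorphism lemma gives $u_{\BCH(\gamma)}(a_i)=u_{e_m}\circ\cdots\circ u_{e_1}(a_i)=a_i$, hence $\BCH(\gamma)\in\ker\D_{a_i}|_{A_0}$ and $\mathrm{span}\{\BCH(\gamma)\mid\gamma\in\pi_1(X,a_i)\}\subseteq\pi_i^{-1}(a_i)$; a topological loop in the $1$-skeleton is turned into a piecewise linear path by replacing each traversed $1$-cell generator $e$ (with $u_e(a)=b$) by $e$, or its negative when traversed backwards, using $u_{-e}\circ u_e=\mathrm{id}$. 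The reverse inclusion — that every $\D_{a_i}$-closed grading-$0$ element is a rational combination of holonomy logarithms $\BCH(\gamma)$ — is the first genuinely hard point: I would prove it by the bracket filtration, checking that modulo $\geq k{+}1$ brackets the closed elements are exhausted by loops while $\BCH$ of iterated loops supplies the higher-bracket corrections, the essential input being that this free model faithfully computes the holonomy (Malcev) Lie algebra of $\pi_1(X,a_i)$.

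For surjectivity I would use the bracket-free part as a flow invariant. Writing $p=\sum_m p^{[m]}$ for a point $p$, the leading term of the flow formula gives $(u_e(p))^{[0]}=p^{[0]}+\D_0e$ with $\D_0\takes C_1(X)\to C_0(X)$ the cellular boundary, so the class $[p^{[0]}]\in H_0(X)=\Q^c$ is unchanged by flows, while $\D_0e$ realises every $0$-boundary. Expanding Maurer-Cartan by bracket-degree, the order-one part $\D_0p^{[1]}+\D_1p^{[0]}+\half[p^{[0]},p^{[0]}]=0$ forces $\D_1p^{[0]}+\half[p^{[0]},p^{[0]}]$ to be $\D_0$-exact; tracing this obstruction shows $[p^{[0]}]$ must be a single basepoint class $\mathbf 1_i$ or $0$ (for instance a leading term $v_1+v_2$ in distinct components produces the non-exact obstruction $[v_1,v_2]$, and $2a_i$ already fails Maurer-Cartan). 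After flowing $p$ so that $p^{[0]}=a_i$ (resp.\ $0$) I would kill the higher-bracket terms order by order by flows $e$ with $e^{[0]}=0$, the vanishing of the successive obstructions being precisely the $\D_{a_i}$-acyclicity that makes $a_i$ an honest point; this is the second hard step and the place where I expect to lean on \cite{B4}.

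Finally, injectivity of $\pi_0$ is the fibre description at the non-vertex origin. From $u_e(0)=\frac{1-e^{-\ad_e}}{\ad_e}\D e$, whose operator factor is invertible as a power series, $u_e(0)=0$ forces $\D e=0$, so $\pi_0^{-1}(0)=\ker\D|_{A_0}$. Since $0$ is not a vertex of $X$ no non-constant edge-path is a loop at $0$, so the only loop is constant and the fibre description yields $\pi_0^{-1}(0)=\mathrm{span}\{0\}=0$, that is $\ker\D|_{A_0}=0$. The group law upgrades this to injectivity: $u_e(0)=u_{e'}(0)$ gives $\BCH(e,-e')\in\pi_0^{-1}(0)=0$, whence $e=e'$. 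The main obstacle throughout is thus the reverse fibre inclusion and the order-by-order integration in the surjectivity step, both of which reduce to the faithfulness of this free DGLA model, the content drawn from \cite{B4}.
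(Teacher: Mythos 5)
The paper itself offers no proof of this lemma: it is quoted from \cite{B4}, so there is no in-house argument to compare against, and your attempt must be judged as a reconstruction. Much of it is sound and uses exactly the paper's toolkit: identifying $\pi_i^{-1}(a_i)=\ker\D_{a_i}|_{A_0}$ is immediate from Lemma 2.2; that $\BCH(\gamma)$ of an edge-loop lies in this kernel follows from Lemma 2.7 as you say; the bracket-free part $p^{[0]}$ giving a flow-invariant class in $H_0(X)$ is correct, and your obstruction analysis of the one-bracket part of Maurer--Cartan is right and can be made rigorous (passing to components, the obstruction lives in a symmetric square of $H_0$ and forces the component masses $\mu_i$ to satisfy $\mu_i\mu_j=0$ for $i\neq j$ and $\mu_i^2=\mu_i$, whence $[p^{[0]}]$ is a single basepoint class or $0$). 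The reduction of injectivity of $\pi_0$ to $\ker\D|_{A_0}=0$ via invertibility of $\frac{1-e^{-\ad_e}}{\ad_e}$ and the $\BCH$ group law is also correct. One small slip: the fibre over $b=u_{e_0}(a_i)$ is the right translate $\{\BCH(f,e_0)\mid f\in\ker\D_{a_i}|_{A_0}\}$, since $u_{\BCH(f,e_0)}(a_i)=u_{e_0}(u_f(a_i))$ by the composition convention of Lemma 2.7; your $\BCH(e_0,f)$ would require $f\in\ker\D_b$.

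The genuine gaps sit exactly at the two places you flag, plus one place you do not. The reverse spanning inclusion ($\ker\D_{a_i}|_{A_0}$ spanned by loop holonomies) and the order-by-order gauge reduction of a point with $p^{[0]}=a_i$ to $a_i$ itself are the entire content of the theorem in \cite{B4}: your ``vanishing of successive obstructions'' is precisely the assertion that the relevant associated-graded twisted homology vanishes in the needed degree, which you assume rather than prove; deferring this to \cite{B4} is legitimate (it is what the paper does), but then your text is a reduction to the cited result, not an independent proof. More seriously, your final step for $\pi_0$ is internally unjustified: you derive $\ker\D|_{A_0}=0$ from ``the fibre description at $0$,'' but the fibre description you established (and the lemma states) applies only at vertex basepoints $a_i$, where loops in the $1$-skeleton exist; at the Maurer--Cartan element $0$, which is not a vertex, ``the only loop is constant'' is an analogy with no supporting argument, and without an independent proof that $\D$ is injective on $A_0$ (a filtration argument, or again \cite{B4}) the injectivity of $\pi_0$ remains open in your write-up.
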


\subsection{LOCALISATION OF MODELS}  As noted in \S1, $A(X)$ is not unique, but is well-defined up to (exact) DGLA isomorphism.

\begin{definition}
A point $a\in{}A^{-1}$ in a model of a regular cell complex $X$, is said to be \underline{local to a cell} $f$ in $X$ if it lies in the submodel generated by the cells in the closure $\overline{f}$ of $f$.
\end{definition}

\noindent By Lemma 2.9, equivalently, a point is local to the cell $f$ if it can be written as $u_e(a_0)$, where $a_0$ is a 0-cell in $\overline{f}$ and $e$ is a zero-graded element in the (free) Lie algebra generated by cells in $\overline{f}$.

\begin{definition}
In a DGLA model $A$ of a regular cell complex $X$, a $k$-cell $f$ (for $k>1$) will be said to be \underline{localised at the point} $a\in{}A_{-1}$ if $\D_af=\D{}f+[a,f]$ lies in the (free) Lie algebra generated by cells of the closure of the geometric boundary $\overline{\D_0f}$.
\end{definition}

\noindent Here, by abuse of notation, we have used the same symbol for the geometric $k$-cell $f$ and the element $f\in{}A_{k-1}$ in its model. By locality of the model and using freeness of the Lie algebra, we see that the point of localisation of a cell $f$ in a model (if it exists) is unique and must be local to the cell (in the sense of Definition 2.10).

\begin{remark}The explicit constructions of models of the bi-gon in \cite{GGL} and triangle in \cite{GL}, are localised (at their `centre' points). Although not all models of $X$ will be such that all cells of dimension $>1$ are localised (for example in the model of the bi-gon in \cite{G}, the main cell is not localised at any point), we will see in \S4 that such models always exist in dimensions up to three. A similar construction should work also in higher dimensions.
\end{remark}

\begin{lemma}
If $A$ is a model of a regular cell complex $X$ in which the $k$-cell $f$ ($k>1$) is localised at the point $a\in{}A_{-1}$, then for any \ig{other}{} $e\in{}A_0$ in the subalgebra generated by the 1-skeleton of $\overline{f}$, there is a variation $A'$ of the model in which the generator for $f$ is replaced by
\[f'=\exp(-\ad_e)\cdot{}f\]
and the cell $f$ is localised at the point $a'=u_e(a)$.
\end{lemma}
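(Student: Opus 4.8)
The plan is to realise $A'$ not by editing the differential of the single generator $f$ in place, but as the pull-back of $A$ under a change of free generators. First I would note that $\ad_e$ is a derivation (Jacobi), so $\exp(-\ad_e)$ is a Lie-algebra automorphism, and that the substitution $f\mapsto f'=\exp(-\ad_e)f=f+(\geq1\ \hbox{brackets})$ is unipotent with respect to the number of brackets. Hence the assignment fixing every generator $c\ne f$ and sending $f\mapsto f'$ extends to an automorphism $\theta$ of the underlying free graded Lie algebra of $A$. I would then define $A'$ to be the same graded Lie algebra with differential $\D'\equiv\theta^{-1}\circ\D\circ\theta$, so that $\theta\takes{}A'\to{}A$ is by construction a DGLA isomorphism (in particular $\D'^2=0$ is automatic) and the generator for $f$ in $A'$ corresponds under $\theta$ to $f'$, as required. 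The reason for routing through $\theta$ rather than simply declaring a new value of $\D f$ is that altering $\D f$ alone would destroy $\D^2=0$ on any cell having $f$ in its boundary; the conjugation viewpoint sidesteps this.

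Next I would check that $A'$ genuinely satisfies the four model axioms, working with $\D'c=\theta^{-1}(\D(\theta c))$. The Maurer--Cartan condition for a vertex $v$ is immediate, since $\theta$ fixes $v$ and is a Lie map, so $\D'v=\theta^{-1}(-\half[v,v])=-\half[v,v]$. For the geometric-boundary condition I would use that both $\D$ and $\theta$ preserve the decreasing filtration by number of brackets, and that the maps they induce on the bracket-free quotient are respectively $c\mapsto\D_0c$ and the identity (the latter because $\theta$ moves only $f$, while $f'\equiv{}f$ modulo brackets); composing, the bracket-free part of $\D'c$ is again $\D_0c$. Locality holds because $\theta^{\pm1}$ preserve each subalgebra generated by the cells of $\overline{c}$: the only generator they move is $f$, and whenever $f\in\overline{c}$ its images $\exp(\mp\ad_e)f$ still lie there, since $e\in\overline{\D_0f}\subseteq\overline{f}\subseteq\overline{c}$.

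It remains to show that $f$ is localised in $A'$ at $a'=u_e(a)$. The essential observation is that, because $f$ is a $k$-cell with $k>1$, the $1$-skeleton of $\overline{f}$ coincides with that of $\overline{\D_0f}$, so $e$ lies in the subalgebra $B$ generated by the cells of $\overline{\D_0f}$, whence $\exp(-\ad_e)$ preserves $B$. Since localisation is intrinsic to the DGLA, transporting the condition for $f$ in $A'$ across $\theta$ turns it into the statement $\D_{a'}f'\in{}B$ computed with the \emph{original} differential of $A$. As $e$ flows from $a$ to $a'=u_e(a)$ in unit time, Lemma 2.3 gives $\D_{a'}\circ\exp(-\ad_e)=\exp(-\ad_e)\circ\D_a$, so
\[\D_{a'}f'=\D_{a'}\exp(-\ad_e)f=\exp(-\ad_e)\D_af.\]
Because $f$ is localised at $a$ we have $\D_af\in{}B$, and applying the $B$-preserving operator $\exp(-\ad_e)$ keeps it in $B$; this is exactly the desired conclusion. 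Flatness of $a'$ follows from Lemma 2.1 and likewise transports across $\theta$.

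I expect the genuine work to be the second step: confirming that the twisted object is still a model, and in particular that the bracket-free (geometric boundary) part and the locality property survive the change of generators. Once $A'$ is in hand and the inclusion $e\in{}B$ is established from $k>1$, the localisation claim itself is essentially a one-line consequence of Lemma 2.3.
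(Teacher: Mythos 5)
Your proof is correct and its decisive step is exactly the paper's: the intertwining $\D_{a'}\circ\exp(-\ad_e)=\exp(-\ad_e)\circ\D_a$ of Lemma 2.3 applied to $f$, yielding $\D_{a'}f'=\exp(-\ad_e)\cdot\D_af$, which remains in the subalgebra generated by the cells of $\overline{\D_0f}$ since for $k>1$ the $1$-skeleton of $\overline{f}$ lies in $\overline{\D_0f}$ and hence $\exp(-\ad_e)$ preserves that subalgebra. The surrounding scaffolding in your write-up --- the free-Lie-algebra automorphism $\theta$, conjugation of the differential, and re-verification of the four model axioms --- is a careful expansion of what the paper compresses into ``immediate from Lemma 2.3,'' so you have taken essentially the same approach with the implicit checks made explicit.
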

\begin{proof}
This is immediate from Lemma 2.3, $\D_{a'}f'=\exp(-\ad_e)\cdot\D_af$.
\end{proof}

\noindent The above lemma means that we can `twist' any model in which a cell is localised, so that the cell is localised at any point we please (which is local to the cell). This technique was used in \cite{GGL} and \cite{GL} to generate symmetric models, by writing first a (non-symmetric) model of the relevant 2-cell localised at a point on its boundary and then twisting it so that it is localised at a symmetric point. All that remained was to verify that the model obtained was indeed symmetric.

\begin{remark}
Since the 1-skeleton of $\overline{f}$ is not simply connected, the construction given by Lemma 2.12 of a model $A'$ in which $f$ is localised at another point $a'$ local to the face $f$, will not be unique. That is, there are different $e\in{}A_0$ for which $u_e(a)=a'$. In particular, a twist by $t\BCH(\gamma)$ of a model in which $f$ is localised at $a_0$, for any $t\in\Q$ and any non-trivial loop $\gamma$ based at $a_0$ in the 1-skeleton on $\overline{f}$, will yield another (distinct) such model.
\end{remark}

\subsection{UNIVERSAL AVERAGES} In \cite{LSi}, it was shown how to construct a universal expression $\mu_n(x_1,\ldots,x_n)$ in the free Lie algebra of $x_1,\ldots,x_n$ such that
\begin{itemize}
\item[(i)] $\mu_n$ is totally symmetric in its arguments,
\item[(ii)] in any DGLA model in which $a,b$ are points with $u_{x_i}(a)=b$ for all $i=1,\ldots,n$, also $u_{\mu_n(x_1,\ldots,x_n)}(a)=b$.
\end{itemize}
It was also shown that the expansion of $\mu_n$ up to three Lie brackets is
\[
\mu_n(x_1,\ldots,x_n)=\tfrac1n\sum_ix_i-\tfrac1{12n^2}\sum_{i,j,\ i\not=j}[x_i,[x_i,x_j]]+\cdots
\]
and we call $\mu_n$ the universal average. There is a closed formula for $\mu_2$, found in \cite{GGL},
\[\mu_2(x,y)=\BCH(x,\tfrac12\BCH(-x,y))\]

\section{The banana 3-cell}
\label{sec:banana}
Let $X_n$ be the $n$-faceted banana, with two 0-cells, $n$ 1-cells, $n$ bi-gon 2-cells and one 3-cell. The corresponding model will contain Lie algebra generators corresponding to each cell; denote them by $a,b$ (grading $-1$), $e_i$ ($1\leq{}i\leq{}n$, grading 0), $f_i$ ($1\leq{}i\leq{}n$, grading 1) and $h$ (grading 2) respectively. Here the orientation on the 2-cell $f_i$ is chosen so that its geometric boundary is $\D_0f_i=e_{i}-e_{i+1}$ where $e_{n+1}\equiv{}e_1$ (indices modulo $n$). The geometric boundary of $h$ is $\D_0h=f_1+\cdots+f_n$.
\[\includegraphics[width=.4\textwidth]{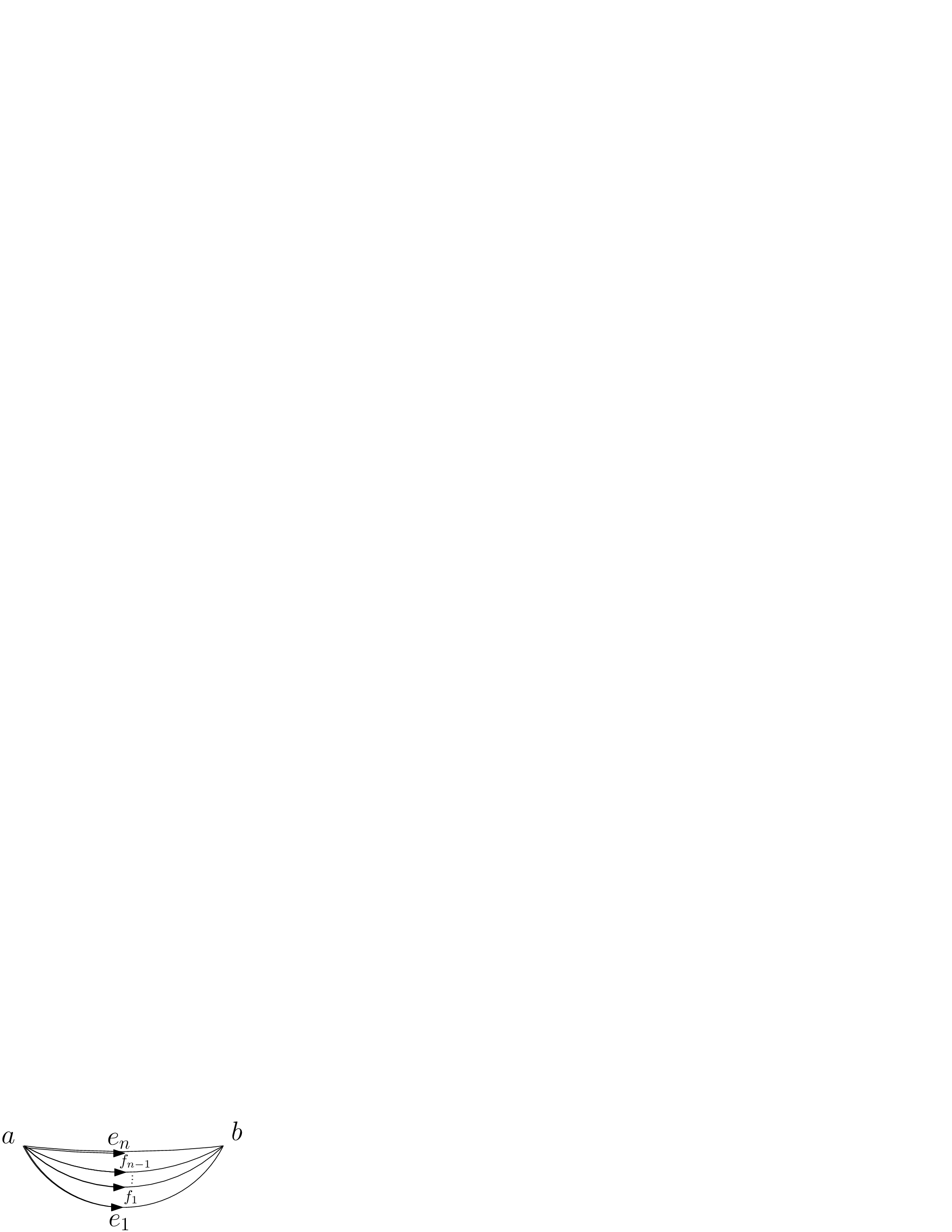}\]

\subsection{Symmetries} The geometric symmetry group of the banana is $D_n\times{}\Z_2$. The dihedral group is generated by the rotation by $\frac{2\pi}n$ around the axis $ab$,
\[\tau: e_i\longmapsto{}e_{i+1},\quad{}f_i\longmapsto{}f_{i+1},\quad{}a,b,h\ \hbox{fixed}\]
and the reflection in the plane containing $e_n$ and the central axis of the banana,
\[\sigma: a,b\ \hbox{fixed},\quad{}e_i\longmapsto{}e_{n-i},
\quad{}f_i\longmapsto{}-f_{n-i-1},\quad{}h\longmapsto-h\]
while the further $\Z_2$ factor is generated by reflection in the plane equidistant from $a$ and $b$,
\[\iota:\ a\longleftrightarrow{}b,\quad{}e_i\longmapsto-e_i,\quad{}
f_i\longmapsto-f_i,\quad{}h\longmapsto-h\]
which commutes with both $\sigma$ and $\tau$. For some purposes we will restrict to the subgroup $D_n=\langle\sigma,\tau\rangle$ of the symmetry group fixing the vertices.

\subsection{0-,1-,2-cells} By the Leibniz rule, the differential $\D$ is determined by its values on generators. On vertices, $\D$ is fixed by the Maurer-Cartan condition, namely
\[\D{a}=-\half[a,a],\qquad \D{b}=-\half[b,b]\>.\eqno{(1)}\]
On $1$-cells, $\D$ is also unique (see Example 2.4),
\[\D{e_i}=\frac{E_i}{1-e^{E_i}}a+\frac{E_i}{1-e^{-E_i}}b
=b-a+\tfrac12[a+b,e_i]+(\geq2\ \hbox{brackets})\eqno{(2)}\]
where $E_i=\ad_{e_i}$. The faces $f_i$ are bi-gons, and we use the symmetric model of the bi-gon from \cite{GGL} in which
\[
\D{}f_i=\BCH(-\tfrac12v_i,e_i,-e_{i+1},\tfrac12v_i)-[x_i,f_i]
=e_i-e_{i+1}-\tfrac12[a+b,f_i]+\cdots\eqno{(3)}
\]
so that $f_i$ is localised at its centre
\[x_i=u_{\frac12v_i}(a)
=\tfrac12(a+b)+\frac1{16}[e_i+e_{i+1},b-a]+(\geq2\ \hbox{brackets})
\] where $v_i$ is the centreline from $a$ to $b$ given by the average
\[
v_i=\mu_2(e_i,e_{i+1})=\BCH(e_i,\tfrac12\BCH(-e_i,e_{i+1}))
=\tfrac12(e_i+e_{i+1})+(\geq2\ \hbox{brackets})
\]
in terms of the universal average $\mu_2$ of \cite{LSi}.
\[\includegraphics[width=.3\textwidth]{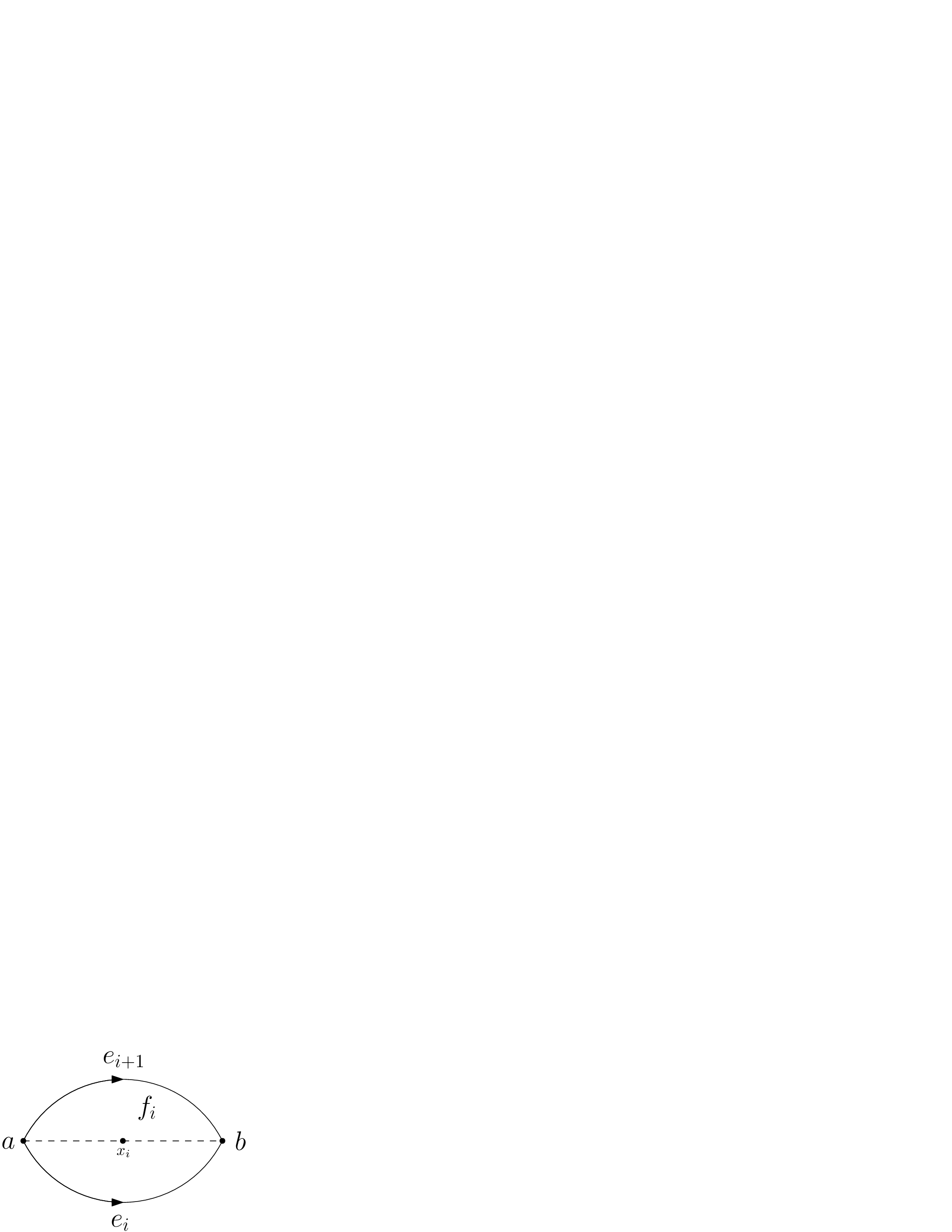}\]

\subsection{Central point of banana} A totally symmetric point should be at the centre of the banana,
\[x=u_{\frac12v}(a)
=\tfrac12(a+b)+\tfrac1{8n}\sum\limits_{i=1}^nE_i(b-a)+\cdots\]
which is half way along a central diagonal of the banana from $a$ to $b$, a path in the direction $v=\mu_n(e_1,\ldots,e_n)$, the universal average of $e_1,\ldots,e_n$. The fact that $x$ is invariant under $\sigma$, $\tau$ and $\iota$ follows from the total symmetry of $\mu_n$ along with the fact that (\cite{LSi}, Lemma 4.3)
\[\mu_n(-e_1,\ldots,-e_n)=-\mu_n(e_1,\ldots,e_n)\]
The only freedom remaining in the model is the boundary of the 3-cell, $\D{h}\in{}A_1$, which, to give a valid model of $X_n$ must be such that $\D^2(h)=0$, while $(\D{h})^{[0]}$  must coincide with the topological boundary $\D_0h=f_1+\cdots+f_n$. The purpose of this section is to give a formula for $\D{h}$ which is invariant under the $D_n$ action of the symmetries of the banana fixing the vertices and in which $h$ is localised at a central point. In order to do this, we will first construct a model in which all the 2-cells and the three-cell are localised at $a$, and then twist using Lemma 2.12 to localise at a symmetric point.

\subsection{Model localised at $a$} Since $f_i$ is localised at $x_i$, thus $f'_i=\exp(\frac12\ad_{v_i})\cdot{}f_i$ is localised at $a$, in particular
\[
\D_a{}f'_i=\BCH(e_i,-e_{i+1})\eqno{(4)}
\]
Similarly, set $h'=\exp(\frac12\ad_{v})\cdot{}h$; by Lemma 2.12, a model in which $h$ is localised at $x$ will have $h'$ localised at $a$.

\begin{lemma} A DGLA model of $X_n$ is defined by generators $a,b,e_i,f'_i,h'$, with the differential defined by (1), (2), (4) along with
\[\D_ah'=\sum\limits_{i=1}^nP_i(\BCH(E_1,-E_2),\ldots,\BCH(E_{n-1},-E_n))\cdot{}f'_i
\eqno{(5)}
\]
for any polynomials $P_i$ in $(n-1)$ non-commuting variables whose initial term is $1$ and  which satisfy the identity
\[
\sum_{i=1}^{n-1}P_i(\ad_{x_1},\ldots,\ad_{x_{n-1}})\cdot{}x_i
=P_n(\ad_{x_1},\ldots,\ad_{x_{n-1}})\cdot\BCH(x_1,\ldots,x_{n-1})
\eqno{(6)}
\]
In this model, the 3-cell and 2-cells are all localised at $a$.
\end{lemma}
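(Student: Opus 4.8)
The plan is to verify that the proposed data actually defines a DGLA model of $X_n$, which amounts to checking three things: that $\D^2=0$ (equivalently $\D_a^2=0$ on the relevant generators), that the bracketless part $(\D h')^{[0]}$ agrees with the geometric boundary, and that the localisation claim holds. Since $a,b,e_i$ and the $f'_i$ already come from the previously established (symmetric bi-gon and interval) models, the only genuinely new generator is $h'$, so the heart of the matter is the single identity $\D_a^2 h'=0$, i.e. $\D_a(\D_a h')=0$. I would begin by recording that $\D_a$ is a genuine (twisted) differential squaring to zero on the submodel generated by $a,b,e_i,f'_i$ — this follows from Maurer--Cartan at $a$ and the Leibniz rule — so that applying $\D_a$ to the right-hand side of (5) is legitimate and the computation reduces to Lie-algebraic bookkeeping.

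\medskip
\noindent\textbf{Main computation.} Applying $\D_a$ to (5) and using $\D_a f'_i=\BCH(e_i,-e_{i+1})$ from (4), together with Lemma~2.6(b) (that $\BCH$ of adjoint operators is the adjoint of $\BCH$) to identify $\BCH(E_i,-E_{i+1})=\ad_{\BCH(e_i,-e_{i+1})}$, I expect $\D_a(\D_a h')$ to collapse into a sum $\sum_i P_i(\ldots)\cdot\BCH(e_i,-e_{i+1})$ where the polynomial arguments are the operators $\ad_{\BCH(e_j,-e_{j+1})}$. Setting $x_j=\BCH(e_j,-e_{j+1})$ for $1\le j\le n-1$, the telescoping $\sum_{i=1}^n\BCH(e_i,-e_{i+1})$ is governed by the associativity of $\BCH$ (Lemma~2.6(c)): the full iterated bracket $\BCH(e_1,-e_2,e_2,-e_3,\ldots,e_n,-e_1)$ collapses, and the $i=n$ term $\BCH(e_n,-e_1)$ can be re-expressed via $\BCH(x_1,\ldots,x_{n-1})$. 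This is precisely what condition (6) is engineered to encode: it is exactly the relation needed so that the coefficient of the telescoped expression vanishes, giving $\D_a^2 h'=0$.

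\medskip
\noindent\textbf{Remaining checks.} The bracketless part is routine: since each $P_i$ has initial term $1$, the lowest-order (bracketless) part of (5) is $\sum_i f'_i$, and because $(f'_i)^{[0]}=f_i$ and $\D h'$ differs from $\D_a h'$ only by the bracket term $-[a,h']$ (which contains at least one bracket), we get $(\D h')^{[0]}=\sum_i f_i=\D_0 h$ as required. Localisation of $h'$ at $a$ is immediate by construction, since formula (5) writes $\D_a h'$ entirely in terms of the $f'_i$ and operators built from the $e_j$, i.e. in the algebra generated by cells of $\overline{\D_0 h}$; localisation of the $f'_i$ at $a$ is inherited from (4). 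Pulling back through the twist $h=\exp(-\tfrac12\ad_v)\cdot h'$ (and $f_i$ correspondingly) via Lemma~2.12 then yields the localisation of $h$ at the central point $x=u_{\frac12 v}(a)$ claimed in the model.

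\medskip
\noindent\textbf{Anticipated obstacle.} The main difficulty is establishing the clean collapse in the $\D_a^2 h'$ computation and confirming that identity (6) is both necessary and sufficient for it. One must be careful that $\D_a$ acting through the polynomial coefficients $P_i(\BCH(E_1,-E_2),\ldots)$ does not generate extra terms — here I would invoke that $\D_a f'_i=\BCH(e_i,-e_{i+1})$ lies in grading $0$ and that $\D_a$ annihilates the operator-valued coefficients (since $\D_a e_i$ contributes only via the already-closed interval relation), so the only surviving contribution is $\sum_i P_i\cdot \D_a f'_i$. Verifying that this precisely matches the two sides of (6) under the substitution $x_j\mapsto \BCH(e_j,-e_{j+1})$, and hence vanishes, is the crux; everything else is formal.
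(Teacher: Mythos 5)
Your proposal follows the paper's proof almost step for step: check that the bracketless part of (5) is $\sum_i f'_i$ using the initial term $1$ of each $P_i$, commute $\D_a$ past the operator-valued coefficients, substitute $\D_af'_i=\BCH(e_i,-e_{i+1})$ from (4), and conclude via (6) applied to $x_i=\BCH(e_i,-e_{i+1})$, using associativity of BCH (Lemma 2.6(c)) to see that $-x_n=\BCH(x_1,\ldots,x_{n-1})$, and Lemma 2.6(b) to identify $\BCH(E_j,-E_{j+1})=\ad_{\BCH(e_j,-e_{j+1})}$. The architecture, and the identification of (6) as the exact vanishing condition, are exactly the paper's.

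The one place where your argument is not yet a proof is precisely the step you flag as delicate: why $\D_a$ commutes with the coefficients $P_i(\BCH(E_1,-E_2),\ldots,\BCH(E_{n-1},-E_n))$. Your stated reason --- that $\D_a$ annihilates the operator-valued coefficients ``since $\D_ae_i$ contributes only via the already-closed interval relation'' --- does not work as written: $\D_ae_i\neq0$ for each individual edge (each $e_i$ flows $a$ to $b$, not $a$ to $a$), so no single $E_i$ commutes with $\D_a$; it is only the BCH combinations that are closed. The paper's argument is: since $u_{e_i}(a)=b$ for all $i$, Lemma 2.7 gives $u_{\BCH(e_i,-e_{i+1})}(a)=a$, whence $\BCH(e_i,-e_{i+1})\in\ker\D_a$ by Lemma 2.2; then for degree-$0$ elements $y\in\ker\D_a$, Leibniz gives $[\D_a,\ad_y]=\ad_{\D_ay}=0$, so any noncommutative polynomial in the operators $\ad_{\BCH(e_j,-e_{j+1})}$ commutes with $\D_a$ and the computation collapses to $\sum_iP_i(\ldots)\cdot\BCH(e_i,-e_{i+1})=0$ by (6), as you predicted. (An equally valid repair, possibly what you were gesturing at: apply $\D_a$ to (4) inside the already-verified bi-gon submodel to get $\D_a\BCH(e_i,-e_{i+1})=\D_a^2f'_i=0$ --- but the closedness must be asserted at the level of the whole BCH, not of the individual $e_i$.) Your final remark about twisting back to localise $h$ at the central point $x$ belongs to the subsequent section rather than this lemma, but is harmless.
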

\begin{proof}
It is apparent that the initial term of $\D_ah'$ as defined by (5) is $\sum\limits_{i=1}^nf'_i$ as required. It remains only to verify that $\D_a^2h'=0$, that is, that the RHS of (5) defines an element of $\ker\D_a$. For this, observe that since $u_{e_i}(a)=b$, thus by Lemma 2.7, $u_{\BCH(e_i,-e_{i+1})}(a)=a$ and so by Lemma 2.2, $\BCH(e_i,-e_{i+1})\in\ker\D_a$. However, for $y_1,\ldots,y_r\in\ker\D_a$ of degree 0,
\[\D_a[y_1,[y_2,\cdots,[y_r,w]\cdots]]=[y_1,[y_2,\cdots,[y_r,\D{}w]\cdots]]\]
and so for any non-commuting polynomial, $P$, \ig{also}{} $P(\ad_{y_1},\ldots,\ad_{y_r})$ commutes with $\D_a$. By Lemma 2.6(b), $\BCH(\ad_x,\ad_y)=\ad_{\BCH(x,y)}$ and it follows that
\begin{align*}
\D_a&\sum\limits_{i=1}^nP_i(\BCH(E_1,-E_2),\ldots,\BCH(E_{n-1},-E_n))\cdot{}f'_i\\
&=\sum\limits_{i=1}^nP_i(\BCH(E_1,-E_2),\ldots,\BCH(E_{n-1},-E_n))\cdot{}\D_af'_i\\
&=\sum\limits_{i=1}^nP_i(\BCH(E_1,-E_2),\ldots,\BCH(E_{n-1},-E_n))\cdot{}\BCH(e_i,-e_{i+1})\\
&=0
\end{align*}
where the second step follows from (4) and the third from (6) applied to $x_i=\BCH(e_i,-e_{i+1})$ since $-x_n=\BCH(x_1,\cdots,x_{n-1})$.
\end{proof}

To see that the previous lemma's constructions actually lead to a model of the $n$-faceted banana, it remains only to show that $P_1,\ldots,P_n$ exist with initial term 1 and satisfying the identity (6). This follows immediately from Lemma 2.6(c), even with $P_n\equiv1$. Indeed, there are many possible choices for $P_1,\ldots,P_n$.

\subsection{Model localised at central point $x$} Twisting the model of Lemma 3.1 back so that the 2-cells are localised at their centres and the 3-cell is localised at its centre $x$, we find the the differential is given by (1), (2), (3) along with
\[\D_xh=e^{-\frac12\ad_v}
\left(\sum\limits_{i=1}^nP_i(\BCH(E_1,-E_2),\ldots,\BCH(E_{n-1},-E_n))\cdot{}
e^{\frac12\ad{v_i}}f_i\right)
\eqno{(7)}
\]
The final requirement is that the model is symmetric under the action of $D_n$, that is, it is invariant under $\sigma$, $\tau$. By the geometry and total symmetry of the bi-gon model, (1), (2) and (3) will be invariant under $\sigma$, $\tau$ and $\iota$. Under $\tau$, $x,h,v$ are fixed while $e_i\longmapsto{}e_{i+1}$, $f_i\longmapsto{}f_{i+1}$, $v_i\longmapsto{}v_{i+1}$ so (7) is invariant under $\tau$ so long as
\begin{align*}
P_i&(\BCH(E_2,-E_3),\ldots,\BCH(E_n,-E_1))\\
&=P_{i+1}(\BCH(E_1,-E_2),\ldots,\BCH(E_{n-1},-E_n))
\end{align*}
which is ensured by
\[
P_{i+1}(x_1,\ldots,x_{n-1})=P_i(x_2,\ldots,x_{n-1},-\BCH(x_1,\ldots,x_{n-1}))\eqno{(8)}
\]
Under $\sigma$, $v,x$ are fixed, $e_i\mapsto{}e_{n-i}$, $f_i\mapsto{}-f_{n-i-1}$, $v_i\mapsto{}v_{n-i-1}$ while $h$ changes sign. In order that (7) be invariant under $\sigma$, it is required that
\begin{align*}
P_i&(\BCH(E_{n-1},-E_{n-2}),\ldots,\BCH(E_1,-E_n))\\
&=P_{n-i-1}(\BCH(E_1,-E_2),\ldots,\BCH(E_{n-1},-E_n))
\end{align*}
which is ensured by (8) along with
\[
P_{n-i}(x_1,\ldots,x_{n-1})=P_i(-x_{n-1},\ldots,-x_1)\eqno{(9)}
\]
Under $\iota$, the quantities $e_i$, $f_i$, $v_i$, $h$ and $v$ all change sign while $x$ is invariant, and so (7) is invariant under $\iota$ so long as
\[
\begin{array}{l}
e^{\frac12\ad_v}P_i(\BCH(-E_1,E_2),\ldots,\BCH(-E_{n-1},E_n))e^{-\frac12\ad_{v_i}}\\
=e^{-\frac12\ad_v}P_{i}(\BCH(E_1,-E_2),\ldots,\BCH(E_{n-1},-E_n))e^{\frac12\ad_{v_i}}
\end{array}\eqno{(10)}
\]
where $v_i=\mu_2(e_i,e_{i+1})$ and $v=\mu_n(e_1,\ldots,e_n)$.

In conclusion we have the following lemma.

\begin{lemma}
The DGLA with (free) Lie algebra generators $a,b,e_i,f_i,h$ and differential defined by (1),(2), (3) and (7) is a model for the $n$-faceted banana which is invariant under the geometric symmetries of the cell fixing the vertices, so long as the polynomials $P_1,\ldots,P_n$ in $(n-1)$ non-commuting variables, with initial term 1, satisfy the identities (6), (8) and (9). It will be completely invariant under the symmetries of the 3-cell if in addition (10) holds.
\end{lemma}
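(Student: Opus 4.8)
The plan is to separate the two assertions—validity of the model, and its invariance—since the bulk of the computational content has already been prepared in the preceding subsections and the proof is largely a matter of assembling it. For validity I would start from Lemma 3.1: the generators $a,b,e_i,f'_i,h'$ with differential (1), (2), (4), (5) already constitute a legitimate DGLA model of $X_n$, with every $2$- and $3$-cell localised at $a$, provided each $P_i$ has initial term $1$ and the family satisfies (6). Applying Lemma 2.12 (equivalently the intertwining $\D_{a'}f'=\exp(-\ad_e)\D_a f$ of Lemma 2.3), with $e=\tfrac12 v_i$ on each face and $e=\tfrac12 v$ on the top cell, twists this into the model $a,b,e_i,f_i,h$ with differential (1), (2), (3), (7), in which $f_i$ is localised at $x_i$ and $h$ at $x$. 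Since Lemma 2.12 yields genuine models, $\D^2=0$ is automatic and $(\D h)^{[0]}=\sum_i f_i$, so (1), (2), (3), (7) is a valid model for every admissible $\{P_i\}$.

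For the $D_n$-invariance I would treat $\tau$ and $\sigma$ in turn, in each case reducing ``the automorphism commutes with $\D$ on $h$'' to an operator identity in the $E_i$ and then checking that the stated polynomial identity forces it. Equations (1), (2), (3) are invariant under $\sigma,\tau,\iota$ by the geometry and the total symmetry of the bi-gon model, so only (7) needs testing; and since $x$ is fixed while $h\mapsto\pm h$, invariance of $\D h$ on the top cell is the corresponding $\pm$-equality for $\D_x h$. Substituting $x_k=\BCH(E_k,-E_{k+1})$, the simplification needed in every case is the telescoping
\[
\BCH\bigl(\BCH(E_1,-E_2),\ldots,\BCH(E_{n-1},-E_n)\bigr)=\BCH(E_1,-E_n),
\]
which follows from associativity (Lemma 2.6(c)) and the cancellation $\BCH(-E_k,E_k)=0$; hence $-\BCH(x_1,\ldots,x_{n-1})=\BCH(E_n,-E_1)$ supplies exactly the ``wrap-around'' argument that is absent from the $n-1$ explicit variables. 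With this, applying $\tau$ (which sends $E_i\mapsto E_{i+1}$, $f_i\mapsto f_{i+1}$, $v_i\mapsto v_{i+1}$, fixing $v,x,h$) and reindexing $i\mapsto i+1$ turns the required equality into precisely (8); applying $\sigma$ (with $E_i\mapsto E_{n-i}$, $f_i\mapsto-f_{n-i-1}$, $v_i\mapsto v_{n-i-1}$, $h\mapsto-h$) produces a reversed, sign-reversed argument list whose matching is secured by (8) together with (9), the two sign changes $f_i\mapsto-f_{n-i-1}$ and $h\mapsto-h$ cancelling.

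For the full symmetry group I would then add $\iota$. Under $\iota$ all of $e_i,f_i,v_i,v,h$ change sign while $x$ is fixed, so $E_i\mapsto-E_i$ and $\BCH(E_k,-E_{k+1})\mapsto\BCH(-E_k,E_{k+1})$; because $\mu_2,\mu_n$ are odd under a global sign flip (\cite{LSi}, Lemma 4.3), the conjugating exponentials $e^{\pm\frac12\ad_v}$ and $e^{\mp\frac12\ad_{v_i}}$ reappear with flipped signs rather than cancelling, so matching $\iota(\D_x h)=-\D_x h$ term by term yields exactly the non-polynomial identity (10). I expect the main obstacle to be the bookkeeping in the orientation-reversing $\sigma$ case: one must simultaneously track the reversal of the ordering of the arguments of $P_i$, the substitution $i\mapsto n-i-1$ in the faces, their interaction with the telescoping identity, and the two competing sign changes, and then confirm that (9) (given (8)) is both necessary and sufficient for the reversed list to match. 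By contrast $\tau$ is a clean reindexing and $\iota$ is mechanical once oddness of the averages is invoked.
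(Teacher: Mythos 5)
Your proposal is correct and takes essentially the same route as the paper, which states this lemma as a summary of the derivation in \S3.5: twist the model of Lemma 3.1 via Lemma 2.12 (by $\tfrac12 v_i$ on faces and $\tfrac12 v$ on the 3-cell), note that (1), (2), (3) are automatically symmetric, and reduce invariance of (7) under $\tau$, $\sigma$, $\iota$ to (8), (8)--(9), and (10) respectively. Your identification of the telescoping identity $\BCH\big(\BCH(E_1,-E_2),\ldots,\BCH(E_{n-1},-E_n)\big)=\BCH(E_1,-E_n)$, hence $-\BCH(x_1,\ldots,x_{n-1})=\BCH(E_n,-E_1)$, as the wrap-around mechanism behind (8) is exactly the point the paper uses (via Lemma 2.6 and $-x_n=\BCH(x_1,\ldots,x_{n-1})$), so there is nothing to add.
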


By Lemma 2.6(a), we can write \[BCH(x,y)=x+y+Q(X,Y)y\eqno{(11)}\] where $Q$ is a polynomial in two non-commuting variables whose lowest order terms are $\tfrac12X+\tfrac1{12}(X^2-YX)+\cdots$.
By Lemma 2.6(c), iterating (11) gives
\[\BCH(x_1,\ldots,x_n)=\sum_{i=1}^nx_i+\sum_{i=2}^nQ(\BCH(X_1,\ldots,X_{i-1}),X_i)x_i\]
so that
\[P_1=P_n=1,\quad{}P_i=1+Q(\BCH(X_1,\ldots,X_{i-1}),X_i)\ \hbox{for $1<i<n$}\eqno{(12)}\]
is a solution of (6). Note that (6) is a linear condition on $(P_1,\ldots,P_n)$ which is invariant under the action of cyclic permutation of the $P_i$ while cyclically permuting the $X_i$ (with $X_n=-\BCH(X_1,\ldots,X_{n-1})$), as well as reversing the order of the $P_i$ while reversing the order and signs of the $x_i$. That is, (6) is invariant under a dihedral group action and so there exists an invariant solution (which will satisfy (8) and (9)) given by averaging the orbit of the solution (12) under the just described dihedral action. The result is that
\begin{align*}
&P_i=1+\tfrac1{2n}\!\!\!\!\sum_{j=1+\delta_{in}}^{i-1}\!\!\!\!Q(\BCH(X_j,\ldots,X_{i-1}),X_i)
+\tfrac1{2n}\sum_{j=i+1}^{n-1}\!\!\!Q(-\BCH(X_i,\ldots,X_j),X_i)\\
&+\tfrac1{2n}\!\!\!\!\sum_{j=1+\delta_{in}}^{i-1}\!\!\!\!Q(\BCH(X_j,\ldots,X_i),-X_i)
+\tfrac1{2n}\sum_{j=i+1}^{n-1}\!\!\!Q(-\BCH(X_{i+1},\ldots,X_j),-X_i)
\end{align*}
satisfies (6), (8) and (9). The coefficient of $f'_i$ in $\D_ah'$ from (5) is now
\begin{align*}
P_i&(\BCH(E_1,-E_2),\ldots,\BCH(E_{n-1},-E_n))\\
&=1+\tfrac1{2n}\sum_{{j=1}\atop{j\not=i,i+1}}^{n}\Big(Q\big(\BCH(E_j,-E_i),\BCH(E_i,-E_{i+1})\big)\\[-18pt]
&\qquad\qquad\qquad\qquad+Q\big(\BCH(E_j,-E_{i+1}),\BCH(E_{i+1},-E_i)\big )\Big)
\end{align*}

\noindent{\bf Theorem 1}\ {\sl The DGLA with (free) Lie algebra generators $a,b,e_i,f_i,h$ and differential defined by (1), (2), (3) and
\begin{align*}
e^{\frac12\ad_v}\D_xh&=\sum_{i=1}^nf'_i+\tfrac1{2n}\sum_{{i,j=1}\atop{j\not=i,i+1}}^n
\Big(Q\big(\BCH(E_j,-E_i),\BCH(E_i,-E_{i+1})\big)\\[-18pt]
&\qquad\qquad\qquad\qquad\qquad+Q\big(\BCH(E_j,-E_{i+1}),\BCH(E_{i+1},-E_i)\big )\Big)f'_i
\end{align*}
defines a model for the $n$-faceted banana which is symmetric under the geometric symmetries of the cell fixing the vertices, where $Q$ is defined by (11), $f'_i=e^{\frac12\ad_{v_i}}f_i$, $v_i=\mu_2(e_i,e_{i+1})$, $v=\mu_n(x_1,\ldots,x_n)$ and $x=u_{\frac12v}(a)$. The 3-cell is localised at $x$ in this model and the bi-gon 2-cells are localised at their centres $x_i=u_{\frac12v_i}(a)$.}

\noindent Up to the first two non-trivial orders (in Lie brackets),
\begin{align*}
v_i&=\tfrac12(e_i+e_{i+1})-\tfrac1{48}(E_i^2e_{i+1}+E_{i+1}^2e_i)+\cdots\\
v&=\tfrac1n\sum_ie_i-\tfrac1{12n^2}\sum_{i\not-j}E_i^2e_j+\cdots\\
x&=\tfrac12(a+b)+\tfrac1{8n}\sum\limits_i[e_i,b-a]+\cdots\\ x_i&=\tfrac12(a+b)+\tfrac1{16}(E_i+E_{i+1})(b-a)+\cdots\\
\end{align*}
Meanwhile, up to second order (in Lie brackets) the differential is given by
\begin{align*}
\D{a}&=-\tfrac12[a,a],\qquad \D{b}=-\tfrac12[b,b]\\
\D{e_i}&=b-a+\tfrac12E_i(a+b)+\tfrac1{12}E_i^2(b-a)+\cdots\\
\D{}f_i&=e_i-e_{i+1}-\tfrac12[a+b,f_i]+\tfrac1{48}(E_{i+1}^2e_i-E_i^2e_{i+1})
+\tfrac1{16}[(E_i+E_{i+1})(a-b),f_i]+\cdots\\
\D{}h&=\sum_if_i-\tfrac12[a+b,h]+\tfrac1{8n}\sum_i[E_i(a-b),h]+\sum_i
\hbox{(quadratic in $E_j$'s)}f_i+\cdots
\end{align*}

\begin{remark}
The particular solution of (6),(8) and (9) constructed above does not satisfy (10) and so the model described in Theorem 1 is not symmetric under the full symmetry group $D_n\times\Z_2$, but only under the part fixing the vertices. In particular, it is not invariant under $\iota$, although such a model does exist. However this model, and the model it induces in \S4 on the cube,  have sufficient symmetry for the applications in \cite{LRS}, where cells will come with a preferred oriented main diagonal.
\end{remark}

\section{A model for an arbitrary polyhedral 3-cell}
\label{sec:polyhedra}
Suppose that $X$ is an arbitrary polyhedral 3-cell, with $n$ faces. Choose two vertices $a$ and $b$. Pick a shelling of the subdivision of the boundary, that is a choice of $n$ non-self intersecting paths $\gamma_1,\ldots,\gamma_n$ each from $a$ to $b$ along edges of the polyhedron, in such a way that for each $i=1,\ldots,n$, the paths $\gamma_i$ and $\gamma_{i+1}$ (with $\gamma_{n+1}\equiv\gamma_1$) have common initial and final segments so that the intermediate segments together (one in reverse orientation) form the geometric boundary of the $i^\th$ face, $g_i$,  of $X$. See the figure below; for each $i$,
\[\gamma_i=\alpha_i\cup\delta_i\cup\beta_i,\quad\gamma_{i+1}=\alpha_i\cup\delta'_i\cup\beta_i\]
while $\delta_i\cup-\delta'_i=\D_0\hbox{($i\th$ face)}$ with matching orientation (faces oriented outwards). Here $\delta_i$ and $\delta'_i$ share common initial and final points, say $p_i$ and $q_i$.
\[\includegraphics[width=.4\textwidth]{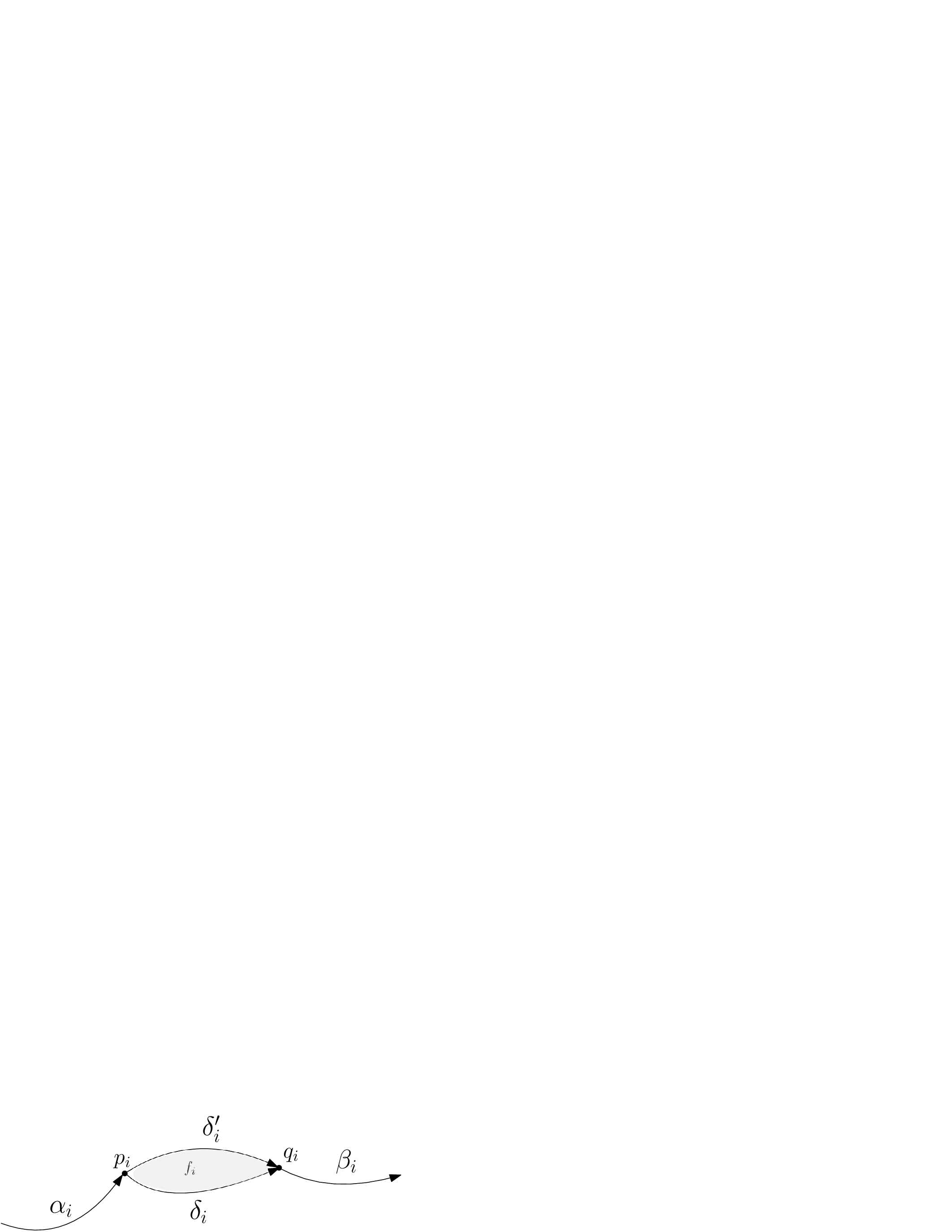}\]

\noindent To construct a model $A(X)$ of $X$, first note that 0-cells and 1-cells have a unique description. For the 2-cells, pick a basepoint (say $p_i$). In a model in which the $i^\th$ cell $f_i$ is localised at $p_i$
\[\D_{p_i}f_i=\BCH(\delta_i,-\delta'_i)\]
To determine a suitable expression for the differential of the 3-cell $h$ in $X$, we decide to localise it at $a$ and then induce $\D{}h$ from the differential (5) on the 3-cell in the $n$-faceted banana $X_n$ using the natural DGLA map $A(X_n)\longrightarrow{}A(X)$ in which
\begin{align*}
a,b&\longmapsto{}a,b\\
e_i&\longmapsto{}\BCH(\gamma_i)\\
f'_i&\longmapsto\exp(\BCH(\alpha_i))g_i\\
h'&\longmapsto{}h
\end{align*}
using the notation of $f'_i$ and $h'$ cells in $X_n$ localised at $a$ as in (4) and (5) above. Thus, one can use in $A(X)$,
\[\D_ah\!=\!\sum\limits_{i=1}^nP_i(\BCH(\ad_{\gamma_i},-\ad_{\gamma_{i+1}}),
\ldots,\BCH(\ad_{\gamma_{n-1}},-\ad_{\gamma_n}))\cdot{}\exp(\BCH(\alpha_i))g_i
\]
If it is desired to localise cells at symmetric central points in place of points on their boundary, then additional twists can be applied according to Lemma 2.12.

\begin{example}
We apply the above general argument to construct a model for the cube in which 2- and 3- cells are localised at their centres. Denote a pair of antipodal vertices by $a,b$. If these vertices and their adjacent edges are removed from the skeleton of the cube, then a hexagon remains joining the six remaining vertices; label the vertices of this hexagon $a_1,\ldots,a_6$ and the edges $f_1,\ldots,f_6$ (with $f_i$ joining $a_i$ and $a_{i+1}$ modulo 6). Denote the edges adjacent to $a$ by $e_1,e_2,e_3$ and their opposite edges (adjacent to $b$) by $\bar{e}_1,\bar{e}_2,\bar{e}_3$ respectively. Orient the edges so that those adjacent to $a$ are oriented away from $a$ and orient other edges so that parallel edges have matching orientation. The faces containing $a$ are labelled $g_1,g_2,g_3$ and their opposite faces with a bar.
\[\includegraphics[width=.4\textwidth]{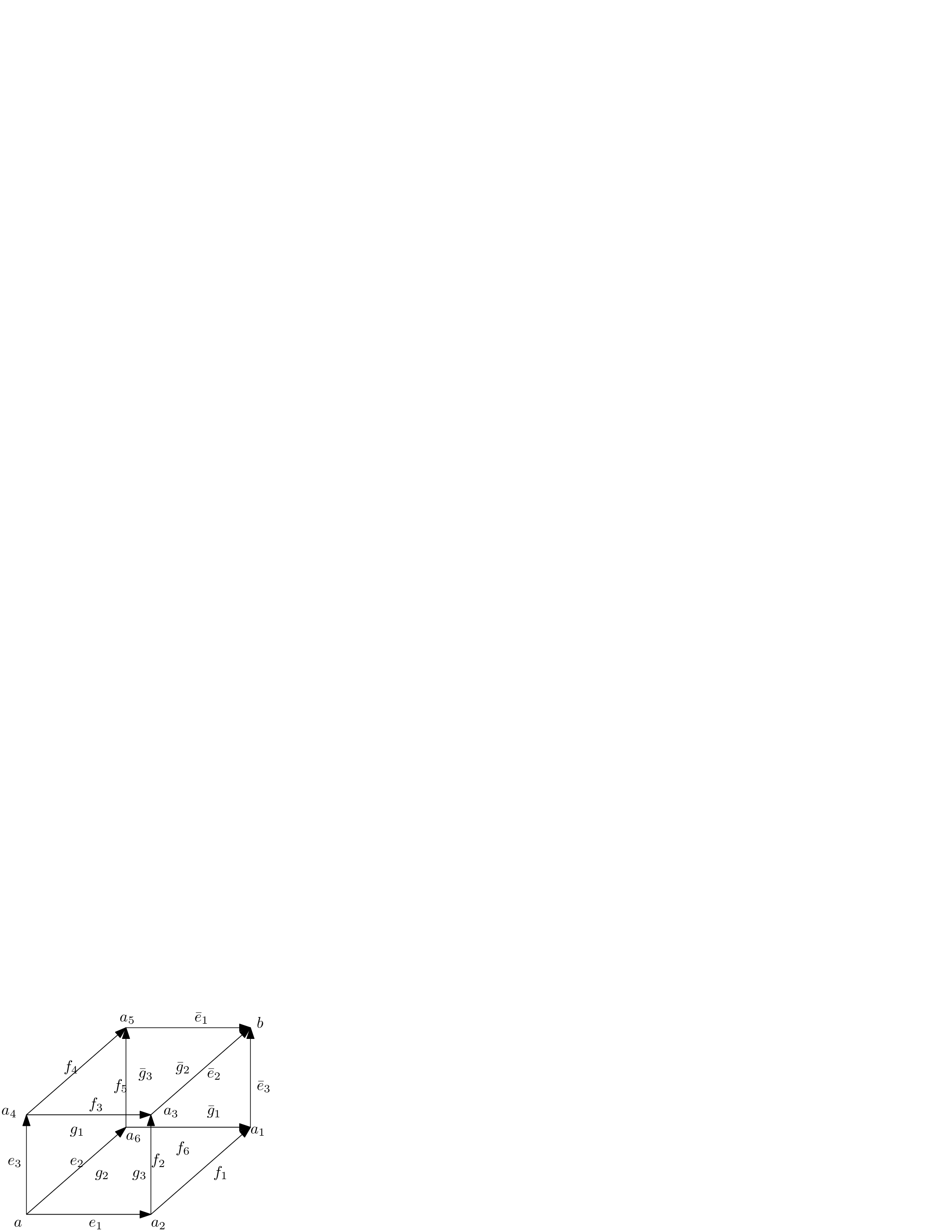}\hskip3em
\includegraphics[width=.3\textwidth]{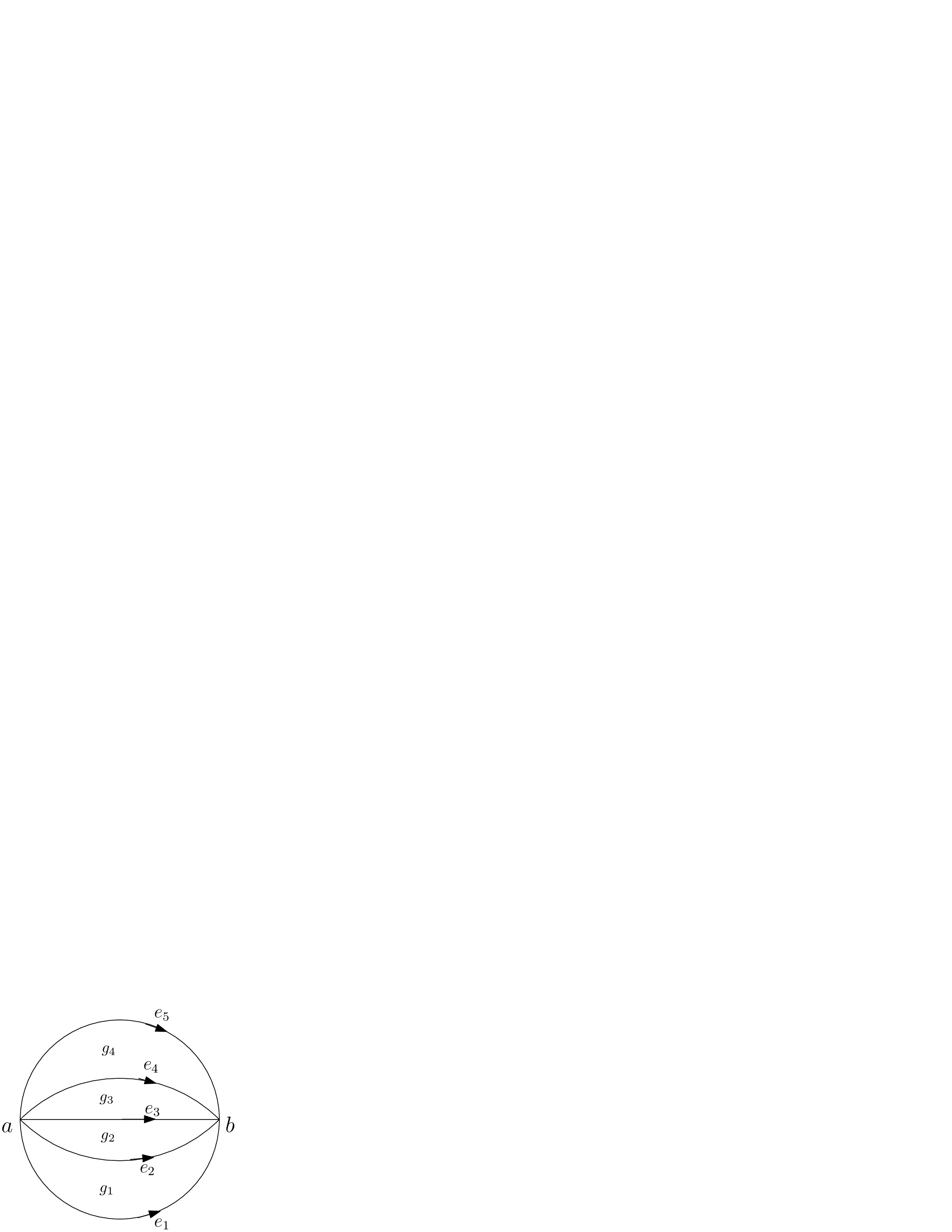}
\]
As usual, the differential on vertices $x$ has $\D{}x=-\tfrac12[x,x]$ while the differential on all edges is also rigidly determined as in Example 2.4. The edge orientations give a partial ordering on the vertices, with $a$ as minimal element and $b$ as maximal element. Each face similarly has a minimal and a maximal element which are diagonally opposite. The differential on faces of the cube is chosen so that they will be localised at their centre.
\[\includegraphics[width=.2\textwidth]{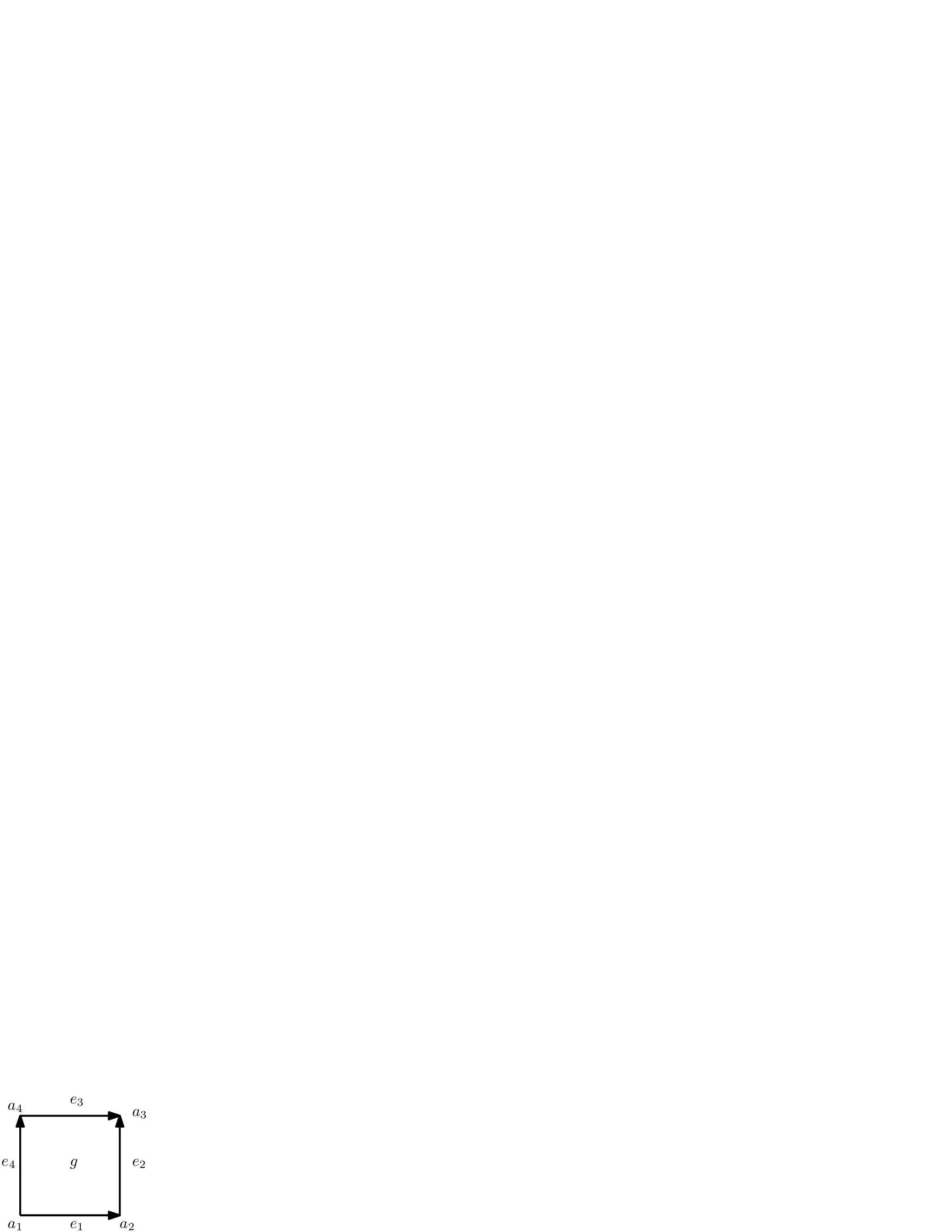}\hskip3em
\includegraphics[width=.3\textwidth]{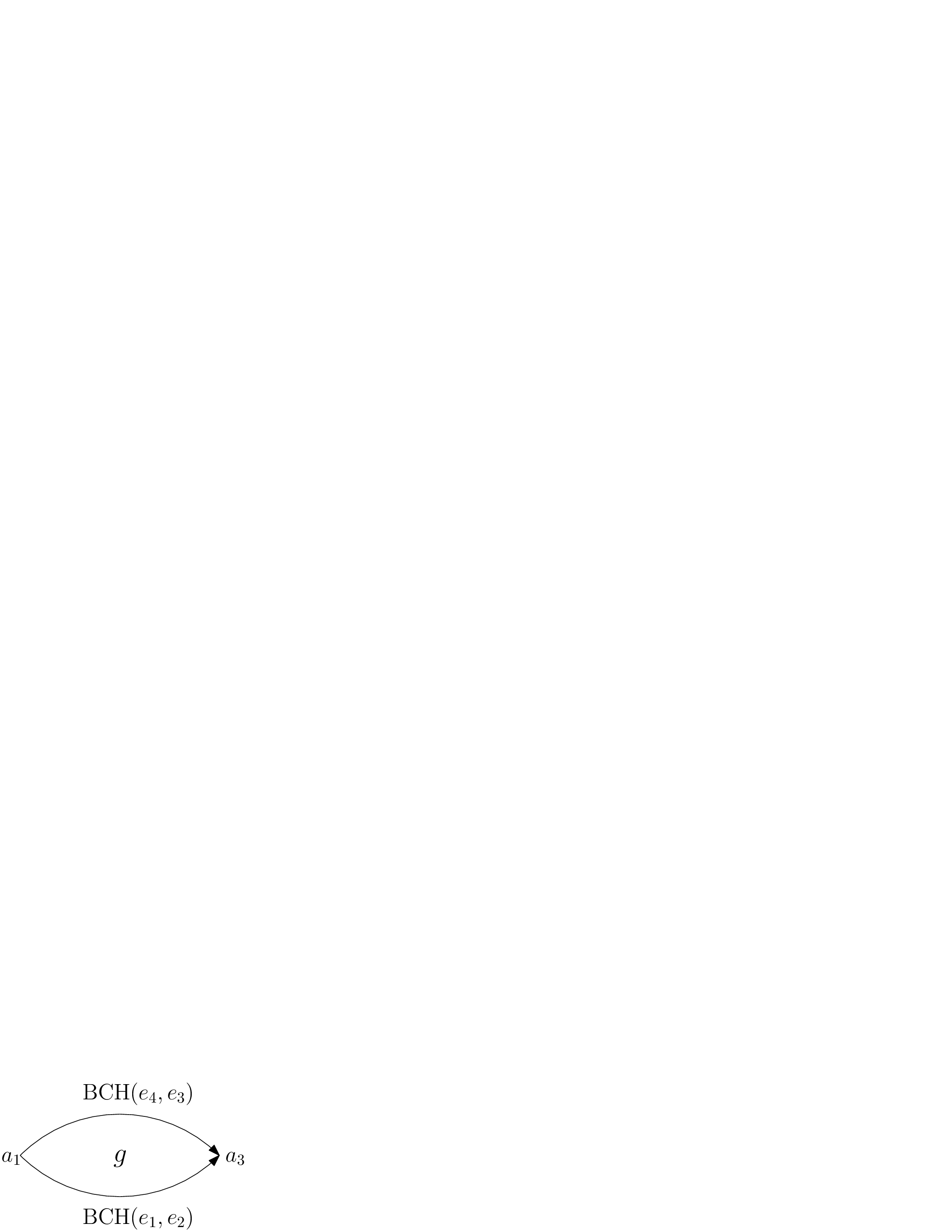}
\]
For a square as depicted, the symmetric model of the bi-gon induces \[\D_x{}g=\exp(-\tfrac12\ad_v)\BCH(e_1,e_2,-e_3,-e_4)\]
where $x=u_{v/2}(a_1)$ is the centre point defined in terms of the centre diagonal
\[v=\mu_2\big(\BCH(e_1,e_2),\BCH(e_4,e_3)\big)\]

The formula for the cube is induced from a symmetric model of a 6-faceted banana, by mapping the edges of $X_6$ to $\BCH$ of the 6 maximal
chains from $a$ to $b$ on the cube, while mapping the faces $f_i$ (based at their centres) to the conjugations of the corresponding faces
of the cube, $\exp(-V_i)g_i$, where $v_i$ is a zero graded element which flows the centre $x_i$  of the $i\th$
face of the cube to the centre of the corresponding bi-gon bounded by two maximal chains (thus changing the point at which it is localised). Here by the centre of a square, we mean the centre point as in
the formulae for $\D$ of a square above. Up to terms with only one bracket (which are all that affect the result
of $\exp(-V_i)g_i$ up to two brackets), the $v$'s for the squares $g_1$, $g_2$, $g_3$, $\bar{g}_1$, $\bar{g}_2$,
$\bar{g}_3$ are $\frac12\bar{e}_1$, $\frac12\bar{e_2}$, $\frac12\bar{e}_3$, $-\frac12e_1$,
$-\frac12e_2$, $-\frac12e_3$, respectively. That is $A(X_6)\longrightarrow{}A(\hbox{cube})$ is given on edges by
\begin{align*}
e_1&\longmapsto\BCH(e_3,f_4,\bar{e}_1)\\
e_2&\longmapsto\BCH(e_2,f_5,\bar{e}_1)\\
e_3&\longmapsto\BCH(e_2,f_6,\bar{e}_3)\\
e_4&\longmapsto\BCH(e_1,f_1,\bar{e}_3)\\
e_5&\longmapsto\BCH(e_1,f_2,\bar{e}_2)\\
e_6&\longmapsto\BCH(e_3,f_3,\bar{e}_2)
\end{align*}
and on faces by
\begin{align*}
g_1&\longmapsto-\exp(-\tfrac12\bar{E}_1){g}_1+(\geq2\hbox{ brackets})\\
g_2&\longmapsto-\exp(\tfrac12E_2)\bar{g}_2+(\geq2\hbox{ brackets})\\
g_3&\longmapsto-\exp(-\tfrac12\bar{E}_3){g}_3+(\geq2\hbox{ brackets})\\
g_4&\longmapsto\exp(\tfrac12E_1)\bar{g}_1+(\geq2\hbox{ brackets})\\
g_5&\longmapsto\exp(-\tfrac12\bar{E}_2){g}_2+(\geq2\hbox{ brackets})\\
g_6&\longmapsto\exp(\tfrac12E_3)\bar{g}_3+(\geq2\hbox{ brackets})
\end{align*}
Up to one Lie bracket in $A(X_6)$
\[\D{}h=\sum\limits_ig_i-\tfrac12[a+b,h]+(\geq2\hbox{ brackets})\]
which transforms on the cube to
\begin{align*}
\D{}h=&\bar{g}_1+g_2+\bar{g}_3-g_1-\bar{g}_2-g_3-\tfrac12[a+b,h]\\
&+\tfrac12(E_3\bar{g}_3-\bar{E}_2g_2+E_1\bar{g}_1+\bar{E}_3g_3-E_2\bar{g}_2+\bar{E}_1g_1)+(\geq2\hbox{ brackets})
\end{align*}
The model of the cube so obtained will be symmetric under symmetries which fix the diagonal $ab$.
\end{example}

\begin{remark}
Notice that the structure of the differential for 3-cells is essentially simpler than that for 2-cells, having a linear dependence on its codimension one boundary sub-cells. Thus in (5), the dependence of $\D_ah'$ upon $f_i$ is linear. This is not true one dimension lower, where the dependence in (4) of $\D_af'_i$ on its boundary cells $e_i$ and $e_{i+1}$ is highly non-linear (through $\BCH$).
\end{remark}

\begin{remark}
The functorial nature of the construction of $A(X)$ from $X$ means that under a subdivision, there should be a corresponding DGLA map. The subdivision map for the interval (or 1-skeleta) is generated by $\BCH$ (see \cite{LS}) and for this reason the differential on a 2-cell (in its simplest form) is the $\BCH$ of its boundary (see (4)). Similarly, the subdivision map for a 2-cell (say splitting a bi-gon into two bi-gons) will be linear and the difference between the two sides gives an expression for the differential on a 3-cell as in (5). This same story should continue in higher dimensions (all dimensions $>2$ will have a linear form).
\end{remark}

\subsection*{Acknowledgments}
 This research was supported in part by Grant No 2016219 from the United States-Israel Binational Science Foundation (BSF). Itay Griniasty is grateful to the Azrieli Foundation for the award of an Azrieli Fellowship.

\end{document}